\DeclareMathOperator{\trace}{trace}
\DeclareMathOperator{\Ad}{Ad}
\newcommand{\LL}{\mathbb{L}}
\newcommand{\bbar}{\begin{pmatrix}}
\newcommand{\ebar}{\end{pmatrix}}
\newcommand{\G}{\mathcal{G}}
\newcommand{\gggg}{\mathcal{G}}
\newcommand{\bdm}{\begin{displaymath}}
\newcommand{\edm}{\end{displaymath}}
\newcommand{\beq}{\begin{equation}}
\newcommand{\beqa}{\begin{eqnarray}}
\newcommand{\beqas}{\begin{eqnarray*}}
\newcommand{\eeq}{\end{equation}}
\newcommand{\eeqa}{\end{eqnarray}}
\newcommand{\eeqas}{\end{eqnarray*}}
\newcommand{\dd}{\textup{d}}
\newcommand{\E}{{\mathbb E}}
\newcommand{\C}{{\mathbb C}}
\newcommand{\D}{{\mathbb D}}
\newcommand{\real}{{\mathbb R}}
\newcommand{\SSS}{{\mathbb S}}
\newcommand{\sym}{\mathcal{S}}
\newcommand{\B}{\mathcal{B}}
\newcommand{\ip}[2]{\langle#1,#2\rangle}
\newcommand{\ipb}[2]{\biggl\langle#1,#2\biggr\rangle}
\newcommand{\sign}{\mathrm{sign}}
\newcommand{\SLR}{\mathrm{SL}(2,\real)}
\newcommand{\slR}{\mathfrak{sl}(2,\real)}
\newcommand{\SLC}{\mathrm{SL}(2,\C)}
\newcommand{\slC}{\mathfrak{sl}(2,\C)}
\newcommand{\SU}{\mathrm{SU}(2)}
\newcommand{\su}{\mathfrak{su}(2)}
   \newtheorem{theorem}{Theorem}[section]
   \newtheorem{proposition}[theorem]{Proposition}
   \newtheorem{corollary}[theorem]{Corollary}
   \newtheorem{lemma}[theorem]{Lemma}
   \newtheorem{definition}[theorem]{Definition}
 \theoremstyle{remark}
   \newtheorem{example}[theorem]{Example}
   \newtheorem{remark}[theorem]{Remark}
\numberwithin{equation}{section}
\begin{document}

\title[The geometric Cauchy problem]{The Geometric Cauchy Problem for Surfaces With Lorentzian Harmonic Gauss maps}

\begin{abstract}
The geometric Cauchy problem for a class of surfaces in a 
 pseudo-Riemannian manifold of dimension $3$ is to find the surface which contains a given curve with a prescribed tangent bundle  along the curve.  We consider this problem for constant negative Gauss curvature surfaces (pseudospherical surfaces) in Euclidean 3-space, and for timelike  constant non-zero mean curvature (CMC) surfaces in  the Lorentz-Minkowski 3-space. We prove that there is a unique solution if the prescribed curve is non-characteristic, and for characteristic initial curves  (asymptotic curves for pseudospherical surfaces and null curves for timelike CMC) 
it is necessary and sufficient for similar data 
 to be prescribed along an additional characteristic curve that intersects the first.  
The proofs also give a means of constructing all solutions using loop group techniques.
The method used is the infinite dimensional d'Alembert type representation 
 for surfaces associated with Lorentzian harmonic maps (1-1 wave maps) into symmetric spaces, developed since the 1990's.  Explicit formulae for the potentials in terms of the prescribed data are given, and some applications are considered.
\end{abstract}

\author{David Brander}
\address{Department of Mathematics\\ Matematiktorvet, Building 303 S\\
Technical University of Denmark\\
DK-2800 Kgs. Lyngby\\ Denmark}
\email{D.Brander@mat.dtu.dk}

\author{Martin Svensson}
\address{Department of Mathematics \& Computer Science\\ and CP3-Origins, Centre of Excellence for Particle Physics Phenomenology\\
  University of Southern Denmark\\ Campusvej 55\\ DK-5230 Odense M\\
   Denmark}
\email{svensson@imada.sdu.dk}

\keywords{Differential geometry, integrable systems, loop groups, constant mean curvature surfaces, pseudospherical surfaces}

\subjclass[2000]{Primary 53A05, 53A35; Secondary 53A10, 53C42, 53C43}

\thanks{Research partially sponsored by CP3-Origins DNRF 
Centre of Excellence Particle Physics Phenomenology. Report no. 
CP3-ORIGINS-2010-40.}

\maketitle


\section{Introduction}

The \emph{geometric Cauchy problem} for a class of surfaces immersed in a manifold $N$  is to find all surfaces
of this class which contain some specified curve and with the surface tangent bundle prescribed along this curve.  This is nothing other than the classical  Bj\"orling 
problem for minimal surfaces addressed to other surface classes.

For minimal surfaces there is a unique solution given by a simple formula, because 
these surfaces have the Weierstrass representation in terms of holomorphic functions,
and the prescribed data is sufficient to determine these holomorphic functions along 
the curve. The solution is then given by analytic extension.
 The geometric Cauchy problem has recently  been studied in several  
situations involving holomorphic representations of surface classes
-- see, for example,  \cite{gm2005, gm2005-2, galvezetal2007, dhkw, galvezmira2004, mira2006, aliaschavesmira2003, bjorling, kimyang2007, sbjorling}.  The solution of this problem
is clearly a useful tool, both for proving general local properties of the surfaces and 
for constructing interesting examples.

The associated partial differential equations (PDE) in the works referred to above are elliptic, and the solutions therefore real analytic.  It is interesting to see what can be done with  
hyperbolic equations.   Aledo, G\'alvez and Mira have recently shown
that the geometric Cauchy problem can be solved for flat surfaces in the 3-sphere \cite{algami2009}, which are associated to the homogeneous wave equation, a hyperbolic
problem.  The situation is quite different from the elliptic case, since the solutions
are not real analytic in general. Nevertheless,  for non-characteristic data,  a unique solution is given in \cite{algami2009} using a d'Alembert type construction.

In this article we aim to address the geometric Cauchy problem for surfaces associated to  harmonic maps from a Lorentzian surface into a Riemannian symmetric space.  The associated PDE for the specific surfaces we will discuss are the 
sine-Gordon equation, the hyperbolic cosh/sinh-Gordon equations and the 
Liouville equation.  There are no classical d'Alembert type solutions to these; 
however Lorentzian harmonic maps have a
 loop group representation \cite{melkosterling, terng1997}. From loop group techniques a kind of infinite dimensional d'Alembert solution can be found, whereby all solutions are given in terms of two functions, each of one variable only \cite{todaagag, dit2000}.
This type of solution was also derived in the late 1970's by Krichever for the sine-Gordon equation \cite{krichever}. 
 
Examples of surfaces associated to Lorentzian harmonic maps
 include constant  Gauss curvature $-1$ surfaces in $\E^3$ (pseudospherical surfaces),  
timelike constant mean curvature (CMC)  surfaces in 
Lorentz-Minkowski 3-space $\LL^3$, and spacelike constant positive Gauss curvature surfaces in $\LL^3$.  Specifically, we treat  the first two of these cases; 
since the main tool is the d'Alembert type solution for Lorentz harmonic maps,
 one expects that the approach can be adapted to other such problems.
 
We  treat both pseudospherical surfaces and timelike CMC surfaces because the problem is not
identical for the two cases.  Firstly, the Gauss map of a timelike CMC surface is Lorentzian
harmonic with respect to the first fundamental form of the surface, while for
pseudospherical surfaces the Gauss map is harmonic with respect to the Lorentzian  metric 
given by the second fundamental form.  Consequently the routes to solving the problems are slightly different.  Secondly, the group involved in the 
construction is compact in the pseudospherical case and non-compact in the timelike CMC case.  
The non-compact case is interesting because the loop group decomposition used
is not global, an issue which will be discussed in future work.

\subsection{Results of this article}  \label{resultssection}
By \emph{uniqueness} of the solution of the geometric Cauchy problem,  we will always mean
the following:  given two solutions $f: M \to N$ and $\tilde f: \tilde M \to N$, then,  at any point $p=\gamma(t_0)$ on
the initial curve $\gamma$,  with $f(z_0)=\tilde f(\tilde z_0)=p$, there are
neighbourhoods $U$ of $z_0$ and $\tilde U$ of $\tilde z_0$, and
an isometry $\phi: U \to \tilde U$ such that $f= \tilde f \circ \phi$.

In Section \ref{tcmcgcpsection} we solve the geometric Cauchy problem for timelike
CMC surfaces. In Theorem \ref{gcptheorem} we prove existence and uniqueness of the solution for timelike or spacelike initial curves, and in Theorem \ref{bptheorem} we give a simple formula for
the potentials used to construct the solution in terms of the initial data.
If the initial curve is a null curve, we prove in 
Theorem \ref{th:GCPnull} that it is necessary and sufficient to specify 
similar geometric Cauchy data on an additional null curve, which intersects
the first, to obtain a unique solution. The potentials are also given explicitly.

As an application, we use in Section \ref{revsection} the solution of the geometric Cauchy problem to find the potentials for timelike CMC surfaces of revolution.

In Section \ref{ksurfsection} we solve the geometric Cauchy problem for 
pseudospherical surfaces.  Given an initial curve $f_0$ and prescribed 
surface normal $N_0$ along the curve, the characteristic case
is distinguished this time not by the curve being null, but by the vanishing of the
inner product $\ip{f_0^\prime}{N_0^\prime}$, which dictates that the curve must
be an asymptotic curve of any solution surface.  

Theorem \ref{ksurftheorem1} states that, if $\ip{f_0^\prime}{N_0^\prime}$ does
not vanish, then there is a unique regular solution to the problem, provided 
that, in addition, $f_0^\prime$ and $N_0^\prime$ are either everywhere parallel
or nowhere parallel. Explicit formulae for the potentials are given.
The characteristic case, where $\ip{f_0^\prime}{N_0^\prime}$ is everywhere zero is treated in
Section \ref{asymptsection}, and here one needs to specify an additional complex
function to obtain a unique solution.

As a consequence of Theorem \ref{ksurftheorem1}, 
 Corollary \ref{ksurfcor} states that, given a space curve
with non-vanishing curvature, with the additional condition 
that the torsion is either zero everywhere or never zero, then there is a unique pseudospherical surface which contains 
this curve as a geodesic.  This geodesic is, of course, a principal curve if and only if
the curve is a plane curve.  Some examples are computed numerically 
(Figures \ref{ellipsefigure},  \ref{parabolafigure}, \ref{cubicfigure}
and \ref{lemfigure}).

\begin{figure}[here] 
\begin{center}
\includegraphics[height=45mm]{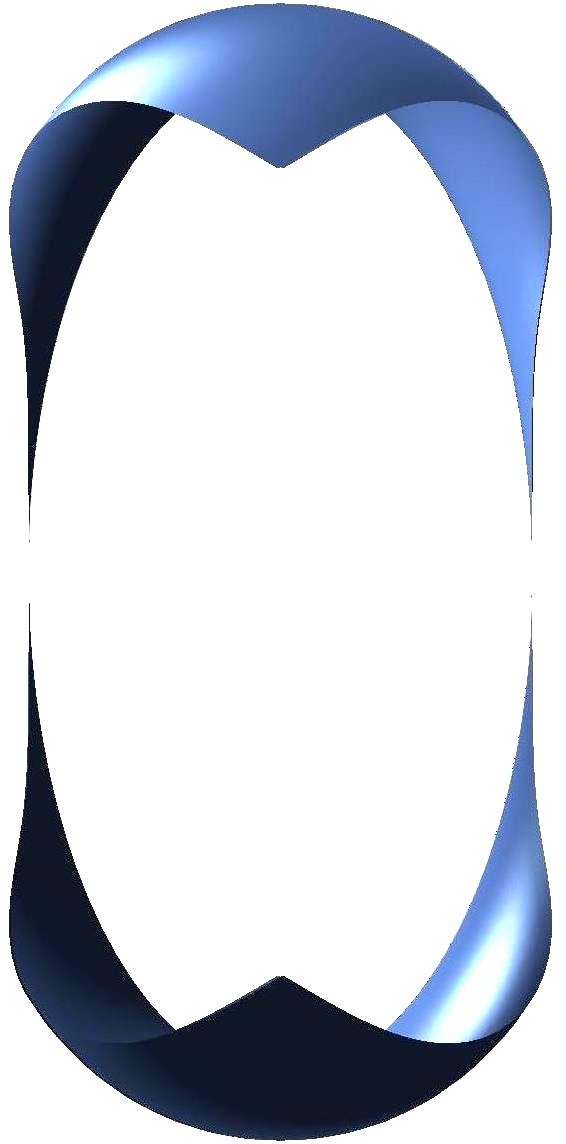} \hspace{2cm}
\includegraphics[height=45mm]{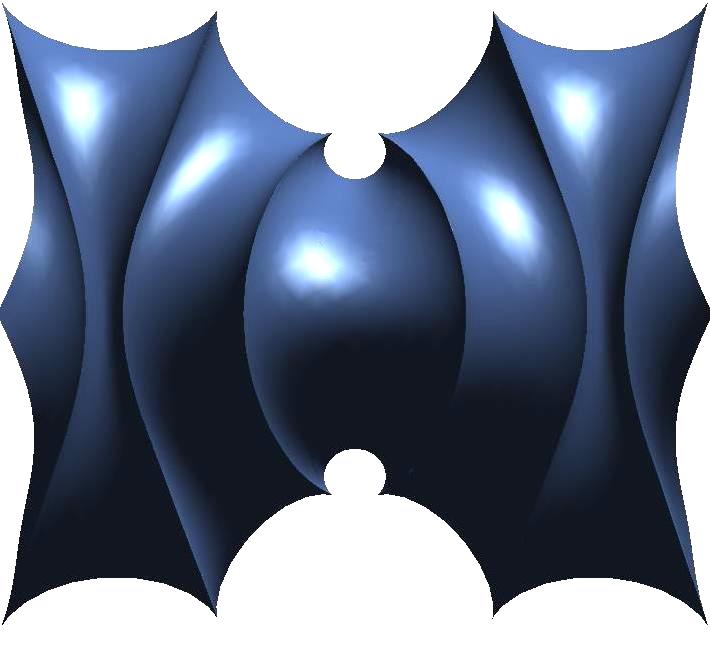}
\end{center}
\caption{Two views of the unique pseudospherical surface that contains the ellipse
 $x^2+(y/2)^2=1$ as a geodesic principal curve. The ellipse  wraps around the 
 smooth central band of the image on the right.}  \label{ellipsefigure}
\end{figure}

\begin{remark}
Generally, we do not discuss the PDE associated to the geometric problems here. The non-characteristic Cauchy problem for the sine-Gordon equation 
has been well studied  within
the class of rapidly decreasing functions; with that type of initial data,
it was solved by Ablowitz et al \cite{akns1}
using inverse scattering.
We point out here that the construction in Section \ref{ksurfsolsection} can be used to prove global existence and uniqueness for the solution to the sine-Gordon equation with arbitrary $C^\infty$
Cauchy data along a non-characteristic curve, and the solutions can be computed numerically by solving an  ordinary differential equation and performing an LU decomposition. The global existence of the solution follows from the global Birkhoff decomposition proved in \cite{jgp}. For the PDE associated to the timelike CMC surface the solution is unique for non-characteristic data, but only proved here to exist on a (large) open set containing the initial curve.
\end{remark}

\noindent \textbf{Notation:} Throughout this article we follow the convention that, if $\hat X$ denotes any object which 
depends on the loop parameter $\lambda$, then dropping the hat means we evaluate at 
$\lambda = 1$, that is $X := \hat X \big|_{\lambda=1}$.


\section{The infinite dimensional d'Alembert solution for timelike CMC surfaces}
We first summarize the method given by Dorfmeister, Inoguchi and Toda \cite{dit2000} for constructing all timelike CMC surfaces from pairs of functions of one variable (called potentials) via a loop group splitting.  The essential idea is the same as
that used earlier by Toda \cite{todaagag} for pseudospherical surfaces in $\E^3$.
The idea of using the loop parameter and the Sym formula to obtain the surface
can be traced back to Sym \cite{sym1985}.  

\subsection{The $\SLR$ frame for a Lorentz conformal immersion}\label{extendedframesection}

We denote by $\LL^3$ the Lorentz-Minkowski space with metric of signature $(-++)$.Take the following basis for the Lie algebra $\slR$:
\begin{eqnarray*}  
e_0 = \bbar 0 & -1 \\1 & 0\ebar, & e_1 = \bbar 0 & 1 \\1 & 0\ebar, & 
    e_2 = \bbar -1 & 0 \\ 0 & 1\ebar.
\end{eqnarray*}
With the inner product $\ip{X}{Y}=\frac{1}{2}\trace(XY)$, the vectors $e_0,e_1,e_2$ form an orthonormal basis with $\ip{e_0}{e_0}=-1$, and we use this to identify $\LL^3 \cong \slR$. 

Let $M$ be a simply connected $2$-manifold. 
\begin{definition} A conformal immersion $f:M\to\LL^3$ is said to be \emph{timelike} if the metric on $M$ induced by $f$ has signature $(-,+)$. 
\end{definition} 

In the following, a timelike immersion will always be understood as a conformal timelike immersion. 

Let $f:M\to\LL^3$ be a timelike immersion. The metric induced by $f$  determines a Lorentz conformal structure on $M$. For any null coordinate system $(x,y)$ on $M$, we define a function $\omega: M \to \real$ by the condition that the induced metric is given by
\begin{equation}\label{first}
\dd s^2 = \varepsilon  e^\omega \, \dd x \, \dd y, \hspace{1cm} \varepsilon= \pm 1.
\end{equation}

Let $N$ be a unit normal field for the immersion $f$,  and define a \emph{coordinate frame} for $f$ to be a map $F:M\to\SLR$ which satisfies 
\begin{eqnarray}  \label{coordframe}
\begin{split}
f_x = \dfrac{\varepsilon_1}{2} e^{\omega/2} \Ad_F (e_0 + e_1), \\
f_y = \dfrac{\varepsilon_2}{2} e^{\omega/2} \Ad_F (-e_0 + e_1), \\
N = \Ad_F(e_2), 
\end{split}
\end{eqnarray}
where $\varepsilon_1,\varepsilon_2\in\{-1,1\}$. In this case \eqref{first} holds with $\varepsilon=\varepsilon_1\varepsilon_2$. Conversely, since $M$ is simply connected, we can always construct a coordinate frame for a timelike conformal immersion $f$. For a regular surface we can choose coordinates and a coordinate frame such that $\varepsilon_1=\varepsilon_2=\varepsilon=1$, but we prefer a set-up that
 can also be used for surfaces which fail to be regular at some points.

The Maurer-Cartan form $\alpha$ for the frame $F$ is defined by
$$
\alpha = F^{-1} \dd F = U \dd x + V \dd y.
$$ 
With the choices made above, one easily computes
\begin{eqnarray*}
U =  \frac{1}{4} \bbar - \omega_x & -4 \varepsilon_1 Q e^{-\frac{\omega}{2}} \\ 
       2 \varepsilon_1 H e^{\frac{\omega}{2}} &  \omega_x \ebar,\qquad 
V = \frac{1}{4} \bbar  \omega_y & -2 \varepsilon_2 H e^{\frac{\omega}{2}} \\ 
       4 \varepsilon_2 R e^{-\frac{\omega}{2}} & - \omega_y \ebar,
\end{eqnarray*}
where $H= 2\varepsilon e^{-\omega}\ip{f_{xy}}{N}$ is the mean curvature and $Q=\ip{f_{xx}}{N}$ and $R=\ip{f_{yy}}{N}$. The quadratic differentials $Q \dd x^2$ and $R \dd y^2$ are independent of the choice of null coordinates (provided the null directions are not
interchanged), and are called the \emph{Hopf differentials} of $M$.

\subsection{The extended coordinate frame for a timelike CMC surface}
The integrability condition $\dd \alpha + \alpha \wedge \alpha =0$, also known as the Maurer-Cartan equation for the $\slR$-valued 1-form $\alpha$, is equivalent to the Gauss-Codazzi equations for the surface:
\begin{gather}
\varepsilon\omega_{xy} + \frac{1}{2}H^2 e^\omega - 2 Q R e^{-\omega} = 0, \label{compat1}\\
H_x = 2 \varepsilon e^{-\omega} Q_y,\qquad H_y = 2 \varepsilon e^{-\omega} R_x. \label{compat2}
\end{gather}
From these it is clear that the mean curvature $H$ is constant if and only if $Q$ and $R$ depend only on $x$ and $y$ respectively, that is, 
\begin{eqnarray*}
H\ \textup{constant} & \Longleftrightarrow & Q_y=R_x=0.
\end{eqnarray*}
It follows easily that these conditions on $R$ and $Q$ do not depend on the choice of Lorentz null coordinates.

For any $\lambda\in\real\setminus\{0\}$, set $Q_\lambda=\lambda Q$ and $R_\lambda=\lambda^{-1} R$. Replacing $Q$ and $R$ by these in the compatibility conditions \eqref{compat1} and \eqref{compat2},  equation \eqref{compat1} is unaffected, whilst equations \eqref{compat2} are satisfied for all $\lambda$ if and only if the mean curvature $H$ is constant.  If this is the case, then the compatibility conditions are also unchanged if we multiply the $(2,1)$ and $(1,2)$ components of $U$ and $V$ respectively by $\lambda$
and $\lambda^{-1}$. Set $\hat \alpha=\hat U \dd x + \hat V \dd y$, with
\beqa  
\begin{split}
\label{withlambda}
\hat U =  \frac{1}{4} \bbar - \omega_x & -4 \varepsilon_1 Q e^{-\frac{\omega}{2}} \lambda  \\ 
       2 \varepsilon_1 H e^{\frac{\omega}{2}} \lambda &  \omega_x \ebar, \\
\hat V = \frac{1}{4} \bbar  \omega_y & -2 \varepsilon_2 H e^{\frac{\omega}{2}} \lambda^{-1} \\ 
       4 \varepsilon_2 R e^{-\frac{\omega}{2}} \lambda^{-1} & - \omega_y \ebar. 
\end{split}
\eeqa
Then we have demonstrated the following:

\begin{lemma} \label{lambdalemma}
The mean curvature $H$ is constant if and only if $\dd \hat \alpha + \hat \alpha \wedge \hat \alpha =0$ for all $\lambda \in \real\setminus\{0\}$.
\end{lemma}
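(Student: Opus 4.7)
The plan is to compute $d\hat\alpha + \hat\alpha \wedge \hat\alpha$ explicitly, expand it as a Laurent polynomial in $\lambda$, and show that the vanishing of every coefficient is equivalent to constancy of $H$. Decomposing $\hat U = D_U + \lambda N_U$ and $\hat V = D_V + \lambda^{-1} N_V$, where $D_U, D_V$ are the (diagonal, $\lambda$-independent) parts and $N_U, N_V$ are the off-diagonal parts, the key algebraic observations are: $[D_U, D_V] = 0$; the product of two off-diagonal $2 \times 2$ matrices is diagonal, so $[N_U, N_V]$ is diagonal; and $[D_U, N_V]$, $[N_U, D_V]$ are off-diagonal. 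Consequently, the identity $\hat V_x - \hat U_y + [\hat U, \hat V] = 0$ separates by $\lambda$-grading into three independent matrix equations:
\begin{align*}
\lambda^0: \quad & \partial_x D_V - \partial_y D_U + [N_U, N_V] = 0, \\
\lambda^1: \quad & -\partial_y N_U + [N_U, D_V] = 0, \\
\lambda^{-1}: \quad & \partial_x N_V + [D_U, N_V] = 0.
\end{align*}

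The $\lambda^0$-identity is precisely the diagonal part of the Maurer--Cartan equation for $\alpha = \hat\alpha|_{\lambda=1}$, and hence reduces to the Gauss equation \eqref{compat1}, which holds automatically for any coordinate frame of a conformal timelike immersion. The $\lambda^{\pm 1}$-identities are supported in the off-diagonal entries; a direct entry-wise computation, parallel to the derivation of \eqref{compat2} from the Maurer--Cartan equation for $\alpha$ but now tracking the factors of $\lambda^{\pm 1}$, shows that the $\lambda^{1}$-equation reduces to the two scalar identities $Q_y = 0$ (from the $(1,2)$-entry) and $H_y = 0$ (from the $(2,1)$-entry), while the $\lambda^{-1}$-equation reduces to $H_x = 0$ and $R_x = 0$.

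Both directions of the equivalence then follow. For the forward direction, if $H$ is constant then $H_x = H_y = 0$, and inserting this into the Codazzi equations \eqref{compat2} for the ordinary frame $F$ forces $Q_y = R_x = 0$; combined with \eqref{compat1}, all three $\lambda$-graded identities hold, and so $d\hat\alpha + \hat\alpha \wedge \hat\alpha = 0$ for every $\lambda$. For the reverse direction, if the Maurer--Cartan equation for $\hat\alpha$ holds for every $\lambda$, then the $\lambda^{\pm 1}$-pieces give $H_x = H_y = 0$ immediately, so $H$ is constant.

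I expect the main obstacle to be purely bookkeeping: one must keep careful track of how the factors $\lambda$ and $\lambda^{-1}$ propagate through the entry-wise expansion of $[\hat U, \hat V]$ in order to separate the resulting Laurent polynomial into its coefficient parts. However, since the analogous calculation at $\lambda = 1$ has already been performed to obtain \eqref{compat2}, no genuinely new identity needs to be derived — the $\lambda$-dressing simply distributes the two Codazzi equations across the $\lambda^{1}$ and $\lambda^{-1}$ pieces while leaving the Gauss equation in place at $\lambda^{0}$.
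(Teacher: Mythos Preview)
Your proposal is correct and follows essentially the same approach as the paper: both arguments observe that inserting the $\lambda^{\pm 1}$ factors into the off-diagonal entries of $U$ and $V$ leaves the Gauss equation \eqref{compat1} invariant (since only the product $QR$ appears) while the Codazzi-type pieces hold for all $\lambda$ precisely when $H_x=H_y=Q_y=R_x=0$. The paper presents this as a brief sketch in the paragraph preceding the lemma, first replacing $Q\to\lambda Q$, $R\to\lambda^{-1}R$ and then handling the $H$-entries separately, whereas you carry out the $\lambda$-grading decomposition in one pass; the content is the same.
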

By a \emph{timelike CMC} surface  we mean a timelike conformal immersion $f:M\to\LL^3$ with constant mean curvature; if $H$ is the constant value of the mean curvature we call it a timelike CMC $H$ surface.

Let $\Lambda\SLC$ denote the group of (smooth) loops with values in $\SLC$, with a suitable
topology.  The $H^s$ topology with $s>1/2$ is sufficient for our purposes.  Define the involution $\rho$ on $\Lambda\SLC$ by 
$$
(\rho \gamma) (\lambda) = \overline{\gamma(\bar \lambda)}.
$$
Denoting by $\Lambda\SLC_\rho$ the fixed point subgroup, we note that any loop $\gamma \in \Lambda\SLC_\rho$ which extends holomorphically to some neighbourhood of $\SSS^1$, is $\SLR$-valued for real values of $\lambda$.
 
Consider another involution $\sigma$ on $\Lambda\SLC$ given by 
$$
(\sigma \gamma) (\lambda) = \Ad_{e_2} \gamma(-\lambda).
$$
It is easy to see that $\sigma$ and $\rho$ commute, and we denote by $\Lambda\SLC_{\sigma}$ the subgroup of $\Lambda\SLC$ of loops fixed by $\sigma$ and by $\Lambda\SLC_{\sigma\rho}$ the subgroup fixed by both $\sigma$ and $\rho$. To indicate that $\Lambda\SLC_{\sigma\rho}$ is a real form of $\Lambda\SLC_\sigma$ we will from now on use the shorthand notation 
$$
\G^\C=\Lambda\SLC_\sigma,\quad \G=\Lambda\SLC_{\sigma\rho}.
$$

We use the same symbols to denote the infinitesimal versions of  $\rho$ and $\sigma$ on the Lie algebra $\Lambda\slC$.  Thus the Lie algebra of $\G$ is
$Lie(\G)=\Lambda\slC_{\sigma\rho}$, the subalgebra of fixed points of $\sigma$ and $\rho$ in $\Lambda\slC$,  whilst $Lie(\G^\C)=\Lambda\slC_\sigma$ is the Lie algebra of $\G^\C$. Elements of $\mathcal{G}$, $\mathcal{G}^\C$ and their Lie algebras all have the property that diagonal and off-diagonal components are even and odd functions of $\lambda$ respectively, due to the twisting introduced by $\sigma$.

Let $f:M\to\LL^3$ be a timelike CMC surface, $F$ a coordinate frame for $f$, and $(x_0,y_0)\in M$ a given point. As a consequence of Lemma \ref{lambdalemma}, 
we can integrate the equation $\hat\alpha=\hat F^{-1}\dd\hat F$ with the initial condition $\hat F(x_0,y_0)=F(x_0,y_0)$ to obtain a map $\hat F:M\to\G$ which we call an \emph{extended coordinate frame} for $f$. Note that $\hat F\big\vert_{\lambda=1}=F$. 

From the extended coordinate frame, we can easily reconstruct our surface $f$ from the so-called \emph{Sym formula}. 
Define a map $\sym: \Lambda \SLC \to\Lambda\slC$ by the formula, for
 any $\hat G \in \Lambda \SLC$,
$$
\sym(\hat G)=2\lambda\partial_\lambda\hat G\hat G^{-1}-\Ad_{\hat G}(e_2). 
$$
For any $\lambda_0$ in the Riemann sphere  $\hat \C$ at which the holomorphic extension of the
loop $\hat G$ is defined, we define $\sym_{\lambda_0}(\hat G)=\sym(\hat G)\big\vert_{\lambda=\lambda_0}$. 

\begin{lemma}  \label{symlemma}
Let $H \in \real\setminus\{0\}$ and let $f$ be a timelike CMC $H$ surface with extended frame 
$\hat F$ as described above, with $f(p) = 0$ for some point
$p \in M$. Then $f$ is recovered by the \emph{Sym formula}
$$
f (z) = \frac{1}{2H}\left \{ \sym_1 (\hat F(z)) - \sym_1(\hat F(p))\right \}.
$$
For other values of $\lambda \in \real\setminus\{0\}$, 
$f^\lambda = \frac{1}{2H} \sym_\lambda (\hat F): M \to \LL^3$
is also a timelike CMC $H$ surface, with the same null coordinate system
and metric, but with Hopf differentials 
$(Q_\lambda, R_\lambda) = (\lambda Q, \lambda^{-1} R)$.
\end{lemma}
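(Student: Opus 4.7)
The plan is to set $\tilde f^\lambda := \frac{1}{2H}\sym_\lambda(\hat F)$ and verify directly that at $\lambda=1$ its partial derivatives with respect to the null coordinates agree with those of $f$; the subtraction of $\sym_1(\hat F(p))/(2H)$ in the Sym formula is then simply the constant of integration that normalizes $f(p)=0$.

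First I would differentiate $\sym(\hat F) = 2\lambda \partial_\lambda \hat F \cdot \hat F^{-1} - \Ad_{\hat F}(e_2)$ with respect to $x$, using $\hat F_x = \hat F \hat U$ and $\partial_x \hat F^{-1} = -\hat U \hat F^{-1}$. The terms containing $\partial_\lambda \hat F$ cancel (since $\partial_\lambda$ and $\partial_x$ commute), leaving
\begin{equation*}
\partial_x \sym(\hat F) = \Ad_{\hat F}\bigl(2\lambda \partial_\lambda \hat U - [\hat U, e_2]\bigr),
\end{equation*}
and analogously for $y$. Substituting the explicit expressions \eqref{withlambda} for $\hat U$ and $\hat V$, a short $2\times 2$ matrix calculation yields
\begin{equation*}
2\lambda \partial_\lambda \hat U - [\hat U, e_2] = \varepsilon_1 H e^{\omega/2}\lambda\,(e_0+e_1), \quad 2\lambda \partial_\lambda \hat V - [\hat V, e_2] = \varepsilon_2 H e^{\omega/2}\lambda^{-1}(-e_0+e_1).
\end{equation*}
Here it is crucial that the off-diagonal entries of $\hat U$ and $\hat V$ depending on $H$ are multiplied by $\lambda$ and $\lambda^{-1}$ respectively, while those depending on $Q,R$ contribute to the $[\,\cdot\,,e_2]$ bracket in exactly the way needed to cancel. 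At $\lambda = 1$, comparison with the coordinate frame relations \eqref{coordframe} gives $\partial_x \tilde f^1 = f_x$ and $\partial_y \tilde f^1 = f_y$, so $\tilde f^1 - f$ is constant and subtracting $\frac{1}{2H}\sym_1(\hat F(p))$ gives exactly the stated formula.

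For the second statement, at general $\lambda_0 \in \real\setminus\{0\}$ the same computation shows
\begin{equation*}
f^{\lambda_0}_x = \tfrac{\varepsilon_1}{2} e^{\omega/2}\lambda_0\, \Ad_{\hat F(\cdot,\lambda_0)}(e_0+e_1), \qquad f^{\lambda_0}_y = \tfrac{\varepsilon_2}{2} e^{\omega/2}\lambda_0^{-1}\, \Ad_{\hat F(\cdot,\lambda_0)}(-e_0+e_1).
\end{equation*}
After the coordinate rescaling $(x,y) \mapsto (\lambda_0 x, \lambda_0^{-1} y)$ absorbing the $\lambda_0$ factors, one sees that $\hat F(\cdot,\lambda_0)$ is a coordinate frame for $f^{\lambda_0}$ with the same conformal factor $e^\omega$ and signs $\varepsilon_i$; reading off the $(2,1)$ and $(1,2)$ entries of $\hat U(\cdot,\lambda_0)$ and $\hat V(\cdot,\lambda_0)$ from \eqref{withlambda} and comparing with the general formula for $U,V$ then identifies the Hopf differentials as $(\lambda_0 Q, \lambda_0^{-1} R)$, and $H$ is unchanged because the diagonal entries of the Maurer-Cartan form are $\lambda$-independent.

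The only real obstacle is the bookkeeping in the matrix computation of $2\lambda\partial_\lambda \hat U - [\hat U, e_2]$: there are four sign conventions in play (the $\varepsilon_1, \varepsilon_2$, the signature of $\ep$ on $\slR$, and the sign in the definition of $\sigma$), and one must check that the cancellation between the $Q$-terms in $2\lambda\partial_\lambda \hat U$ and in $[\hat U, e_2]$ is exact, leaving only the $H$-contribution along $e_0+e_1$. No conceptual difficulty arises since $\sigma$-twisting guarantees off-diagonal entries are odd in $\lambda$, which is what makes $2\lambda\partial_\lambda$ act as multiplication by $1$ on those entries (modulo the bracket correction).
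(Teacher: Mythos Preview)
Your approach is essentially the same as the paper's: both set $\tilde f = \frac{1}{2H}\sym_1(\hat F)$ and verify $\tilde f_x = f_x$, $\tilde f_y = f_y$, then use $\tilde f(p)=f(p)=0$ to conclude. You spell out the derivative computation $\partial_x\sym(\hat F)=\Ad_{\hat F}(2\lambda\partial_\lambda\hat U-[\hat U,e_2])$ and the resulting $2\times 2$ identity explicitly, whereas the paper simply writes ``It is readily verified''; but the argument is identical in substance.

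For the associated family $f^\lambda$ there is a small difference in presentation. The paper argues directly: from $f^\lambda_x=\lambda f_x$, $f^\lambda_y=\lambda^{-1}f_y$ (understood up to the moving frame) it observes that the same $N$ serves as normal and then computes $H=2\varepsilon e^{-\omega}\ip{f^\lambda_{xy}}{N}$. You instead absorb the $\lambda_0,\lambda_0^{-1}$ factors via the null-coordinate rescaling $(x,y)\mapsto(\lambda_0 x,\lambda_0^{-1}y)$, recognise $\hat F(\cdot,\lambda_0)$ as a coordinate frame in the new coordinates, and read off $H$, $Q$, $R$ from the Maurer--Cartan matrices. Both routes are short and rest on the same derivative identity you established; yours has the advantage of making the identification of the Hopf differentials transparent from the form \eqref{withlambda}, while the paper's avoids introducing new coordinates. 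Neither involves any idea not present in the other.
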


\begin{proof}
To verify the formula for $f(z)$, set 
$\tilde f(z) = \frac{1}{2H}(\sym_1 (\hat F (z)) - \sym_1(\hat F(p)))$. It is readily verified that $\tilde f_x = f_x$ and $\tilde f_y = f_y$. Since $\tilde f(p) = f(p) = 0$, we see that $\tilde f(z) = f(z)$ for all $z \in M$.  

To verify that $f^\lambda$ is timelike CMC $H$ for other values of $\lambda$, we 
note  $f^\lambda_{x} = \lambda f_x$ and $f^\lambda_{y} = \lambda^{-1} f_y$,
so that $N$ is also the normal to this surface. It follows now easily that $H = 2\varepsilon e^{-\omega} \ip{f^\lambda_{xy}}{N}$.
\end{proof}

\subsection{The loop group characterization of timelike CMC surfaces}
The next proposition identifies the essential properties of the matrices $\hat U$ and $\hat V$ that we need in \eqref{withlambda} in order to characterize timelike CMC surfaces as loop group maps.

\begin{definition}  \label{admissibleframedef}
Let $M$ be a simply connected subset of $\real^2$, and let $(x,y)$ denote the standard 
coordinates. An \emph{admissible frame} on $M$ is a smooth map $\hat F: M \to \G$  such that the Maurer-Cartan form of $\hat F$ has the form
\begin{equation}  \label{admissible}
\hat F^{-1} \dd \hat F =\lambda\, A_{1}\, \dd x + \alpha_0 + \lambda^{-1}A_{-1}\, \dd y,
\end{equation}
where the $\slR$-valued 1-form $\alpha_0$ is constant in $\lambda$.  The admissible frame
 $\hat F$ is said to be \emph{regular} if $[A_1]_{21}\neq 0$ and $[A_{-1}]_{12}\neq 0$. 
\end{definition}
Due to the loop group twisting,  a regular admissible frame can be written
 $\hat F^{-1} \dd \hat F = \hat U \dd x + \hat V \dd y$, with
  $$
 \hat U = \bbar a_1 & b_1 \lambda \\ c_1 \lambda & -a_1 \ebar\quad \text{   and   }\quad
 \hat V = \bbar a_2 & b_2 \lambda^{-1} \\ c_2 \lambda^{-1} & -a_2 \ebar,
 $$
 where $c_1$ and $b_2$ are non-zero, and we use this notation in the next proposition.

\begin{proposition}  \label{characterizationprop}
Let $\hat F: M \to \G$ be a regular admissible frame and $H\neq0$. 
Set $\varepsilon_1=\sign(c_1)$, $\varepsilon_2=-\sign(b_2)$ and $\varepsilon=\varepsilon_1\varepsilon_2$. Define a Lorentz metric on $M$ by
$$
\dd s^2 = \varepsilon e^\omega \dd x \, \dd y, \quad
\varepsilon e^\omega = -\dfrac{4 c_1 b_2}{H^2}.
$$
Set
$$
f^\lambda =\frac{1}{2H}\sym_{\lambda}(\hat F):M\to\LL^3\qquad(\lambda\in\real\setminus\{0\}).
$$
Then, with respect to the choice of unit normal $N^\lambda = \Ad_{\hat F} e_2$ and the given metric, the surface $f^\lambda$ is a timelike CMC $H$ surface. Set
$$
\rho=\left|\frac{b_2}{c_1}\right|^{\frac{1}{4}},\qquad T=\bbar \rho & 0 \\ 0 & \rho^{-1} \ebar,  
$$
and set $\hat F_C = \hat F T : M \to \G$. Then $\hat F_C$ is the 
extended frame for the surface $f=f^1$, with respect to the coordinate frame defined by
\beqas
f_x = \varepsilon_1 \frac{1}{2} e^{\omega/2} \Ad_{F_C} (e_0 + e_1), \quad
f_y =  \varepsilon_2 \frac{1}{2}e^{\omega/2} \Ad_{F_C} (-e_0+e_1),  \\
N = \Ad_{F_C} e_2 = \Ad_F e_2.
\eeqas
\end{proposition}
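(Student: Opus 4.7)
The plan is to compute the first derivatives of $f^\lambda$ directly from the Sym formula, identify the induced metric and the unit normal, and then show that $\hat F_C=\hat F T$ has Maurer-Cartan form of the canonical shape \eqref{withlambda}. The CMC property and the identification of $\hat F_C$ as the extended frame of $f=f^1$ then follow from Lemma \ref{lambdalemma} and the derivation preceding it.

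To compute the first derivatives, I use the identity $\partial_x \sym(\hat F) = \Ad_{\hat F}\bigl(2\lambda\partial_\lambda \hat U - [\hat U, e_2]\bigr)$ (obtained by differentiating the Sym formula and using $\hat F_x = \hat F \hat U$), and similarly in $y$. Writing $\hat U = A_0 + \lambda A_1$ with $A_0$ diagonal and $A_1$ off-diagonal with $(1,2)$-entry $b_1$ and $(2,1)$-entry $c_1$, a short calculation using $[e_2,A_0]=0$ gives $2\lambda\partial_\lambda\hat U - [\hat U,e_2] = 2\lambda c_1 (e_0+e_1)$. Hence $f^\lambda_x = (\lambda c_1/H)\Ad_{\hat F}(e_0+e_1)$ and, analogously, $f^\lambda_y = (\lambda^{-1}b_2/H)\Ad_{\hat F}(e_0-e_1)$. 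Since $e_0\pm e_1$ are null with $\ip{e_0+e_1}{e_0-e_1} = -2$ and $e_2$ is a unit vector orthogonal to both, it follows at once that $(x,y)$ is a null coordinate system, that $N^\lambda = \Ad_{\hat F} e_2$ is a unit normal, and that $2\ip{f^\lambda_x}{f^\lambda_y} = -4c_1 b_2/H^2 = \varepsilon e^\omega$, reproducing the prescribed metric.

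For the gauge step, since $T$ is constant and diagonal, $\hat F_C^{-1}\dd\hat F_C = T^{-1}\hat\alpha T$ preserves the diagonal entries of $\hat U, \hat V$ and rescales the off-diagonal ones: the $(2,1)$-entry of $\hat U$ becomes $\rho^2 c_1 \lambda = \varepsilon_1\sqrt{|b_2 c_1|}\,\lambda$, which matches the $(2,1)$-entry $(\varepsilon_1/2)H e^{\omega/2}\lambda$ of \eqref{withlambda} via the identity $H^2 e^\omega = 4|b_2 c_1|$ (up to signs depending on $\sign H$), and analogously for the $(1,2)$-entry of $\hat V$. To identify the diagonal entries $a_1,a_2$ with $-\omega_x/4$ and $\omega_y/4$, I expand the Maurer-Cartan equation $\hat V_x - \hat U_y + [\hat U, \hat V] = 0$ in powers of $\lambda$; the $\lambda^{\pm 1}$ components read $(c_1)_y = 2a_2 c_1$ and $(b_2)_x = -2 a_1 b_2$. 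Combining these with $e^\omega = 4|c_1 b_2|/H^2$ yields $\omega_y = 4a_2$ and $\omega_x = -4a_1$, while the remaining off-diagonal entries simply define the Hopf differentials $Q,R$.

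Having matched $\hat F_C$ with \eqref{withlambda}, Lemma \ref{lambdalemma} and the construction preceding it show that $f=f^1$ is a timelike CMC $H$ surface with extended coordinate frame $\hat F_C$. The coordinate frame identities in the statement follow by setting $\lambda=1$ in the formulae for $f^\lambda_x, f^\lambda_y$ derived above, using $\Ad_{T^{-1}}(e_0\pm e_1) = \rho^{\mp 2}(e_0\pm e_1)$; the claim for $f^\lambda$ with $\lambda\ne 1$ then follows directly from Lemma \ref{symlemma}. The main technical obstacle is matching the diagonal part of the gauged Maurer-Cartan form with $\mp\omega_x/4$ and $\pm\omega_y/4$, which requires the $\lambda^{\pm 1}$ components of the integrability condition together with the algebraic definition of $\omega$ in terms of $c_1,b_2,H$.
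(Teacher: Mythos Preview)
Your computation of $f^\lambda_x$ and $f^\lambda_y$ from the Sym formula, and the resulting identification of the metric and normal, is correct and is exactly what the paper does (the paper carries it out with $\hat F_C$ in place of $\hat F$, but the two are related by $\Ad_T(e_0\pm e_1)=\rho^{\mp 2}(e_0\pm e_1)$, as you note). The gap is in the gauge step: $T$ is \emph{not} constant in $x,y$, because $\rho=|b_2/c_1|^{1/4}$ and $b_2,c_1$ are functions on $M$. Hence $\hat F_C^{-1}\dd\hat F_C = T^{-1}\hat\alpha T + T^{-1}\dd T$, and the extra term $T^{-1}\dd T=\textup{diag}(\rho^{-1}\rho_x,-\rho^{-1}\rho_x)\,\dd x + \textup{diag}(\rho^{-1}\rho_y,-\rho^{-1}\rho_y)\,\dd y$ contributes to the diagonal. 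Your deduction $a_1=-\omega_x/4$ does not in fact follow from the integrability condition: from the $\lambda^{\pm1}$ components you only get $(b_2)_x/b_2=-2a_1$ and $(c_1)_y/c_1=2a_2$, whereas $\omega_x=(c_1)_x/c_1+(b_2)_x/b_2$ also involves $(c_1)_x$, which is unconstrained. (Take e.g.\ $a_1=0$ with $c_1$ a nonconstant function of $x$ alone and $b_2$ constant; then $\omega_x\neq 0=-4a_1$.) What \emph{is} true is that $\rho^{-1}\rho_x+a_1=-\omega_x/4$, and it is precisely the $T^{-1}\dd T$ term you dropped that supplies the missing piece.

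The paper sidesteps this computation entirely: having obtained $f_x=\varepsilon_1\tfrac12 e^{\omega/2}\Ad_{F_C}(e_0+e_1)$ and $f_y=\varepsilon_2\tfrac12 e^{\omega/2}\Ad_{F_C}(-e_0+e_1)$, it observes that $F_C$ is by definition a coordinate frame in the sense of \eqref{coordframe}. The Maurer-Cartan form of a coordinate frame was already computed in Section~\ref{extendedframesection} to be exactly of the shape \eqref{withlambda} (with $\lambda=1$), so $\hat F_C$ is automatically an extended coordinate frame and Lemma~\ref{lambdalemma} applies. No separate verification of the diagonal entries is needed. Your argument is easily repaired by including $T^{-1}\dd T$, but the paper's route is shorter.
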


\begin{proof}
Since $T$ is diagonal and constant in $\lambda$, we have $\sym( \hat F_C) = \sym (\hat F)$
and $\hat F_C^{-1} \dd \hat F_C = \hat U_C \dd x + \hat V_C \dd y$, where
\beqas
\hat U _C = \bbar \rho^{-1}\rho_x+a_1 & \rho^{-2} b_1 \lambda \\ \rho^2 c_1 \lambda 
   & -\rho^{-1}\rho_x-a_1  \ebar, \\
 \hat V_C = \bbar \rho^{-1}\rho_y+a_2 & \rho^{-2} b_2 \lambda^{-1} \\
    \rho^2 c_2 \lambda^{-1} & -\rho^{-1}\rho_y-a_2 \ebar.
\eeqas
Differentiating  $f=\frac{1}{2H}\sym_{1}(\hat F_C)$,
we use $\hat U_C$ and $\hat V_C$ to compute 
\begin{eqnarray*}
f _x =  \frac{c_1 \rho^2}{H} \Ad_{F_C}(e_0+e_1),\qquad 
f _y = - \frac{b_2 \rho^{-2}}{H} \Ad_{F_C}(-e_0+e_1).
\end{eqnarray*}
It follows that $f$ is conformally immersed with conformal factor $\varepsilon e^\omega=-\frac{4 c_1b_2}{H^2}$, and $F_C$ is the coordinate frame given at
equation (\ref{coordframe}), with $\varepsilon_i$ as defined here. Therefore $\hat F_C$ has precisely the form of an extended coordinate frame for a timelike CMC $H$ surface and, by Lemma \ref{lambdalemma}, the result follows.
\end{proof}

\subsection{The d'Alembert construction of timelike CMC surfaces}\label{dpwsection}

The point of Lemma \ref{symlemma} and Proposition \ref{characterizationprop} is
 that the problem of finding a conformal timelike CMC $H \neq 0$ immersion $M \to \LL^3$ is equivalent to finding an admissible frame.  In this section, we explain how to construct an admissible frame from two pairs of real functions.

Let $\Lambda^\pm\SLC_\sigma$ denote the subgroup of $\G^\C$ consisting of loops which extend holomorphically to $\D^\pm$, where $\D^+$ is the unit disc and $\D^- = \SSS^2 \setminus \{ \D^+ \cup \SSS^1 \}$, the exterior disc in the Riemann sphere. Set $0^+:=0$ and $0^-:=\infty$ and define 
$$
\G^{\pm}=\G\cap\Lambda^\pm\SLC_\sigma,\quad \G^\pm_*=\{\gamma\in\G^\pm\ |\ \gamma(0^\pm)=I\}.
$$
We define the complex versions $\G^{\C\pm}$ analogously by substituting $\G^\C$ for $\G$ in the above definitions. 

The Birkhoff decomposition for $\Lambda\SLC$ \cite{PreS} restricts to a decomposition of either of the subgroups $\G^\C$ or $\G$; see \cite{branderdorf}, Proposition 1, for a general statement for fixed-point subgroups with respect to finite order automorphisms of the type used here.

\begin{theorem}[The Birkhoff decomposition]\label{birkhoff}$\phantom{}$
\begin{enumerate}
\item
The sets $\B_L=\G^-\cdot\G^+$ and $\B_R=\G^+\cdot\G^-$ are both open and dense in $\G$.  The multiplication maps 
$$
\G^-_*\times\G^+\to\B_L \ \text{  and  }\ \G^+_*\times\G^-\to\B_R
$$
are both real analytic diffeomorphisms.
\item
The analogue holds substituting $\G^\C$, $\G^{\C \pm}$ and $\G^{\C \pm}_*$ for $\G$, $\G^\pm$ and $\G^\pm_*$, respectively, writing $\B_L^\C=\G^{\C-}\cdot\G^{\C+}$ and $B_R^\C=\G^{\C+}\cdot\G^{\C-}$. 
 \end{enumerate}
\end{theorem}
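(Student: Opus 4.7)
My plan is to reduce both statements to the classical Birkhoff decomposition of the untwisted group $\Lambda\SLC$ from \cite{PreS}, using the general principle that if a family of commuting automorphisms preserves the $\Lambda^\pm$ splitting, then the classical factorization restricts to a factorization of the fixed-point subgroup. This principle is formalized and proved, for finite-order twistings of the type at hand, in \cite{branderdorf}, Proposition~1, so at the end I would cite that result to complete the argument.

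First I would verify that each of $\sigma$ and $\rho$ preserves $\Lambda^\pm\SLC$ together with the normalization at $0^\pm$. For $\sigma$, the map $\lambda\mapsto-\lambda$ is an automorphism of each of $\D^+$ and $\D^-$ fixing $0$ and $\infty$, so $\Ad_{e_2}\gamma(-\lambda)$ extends holomorphically to $\D^\pm$ whenever $\gamma$ does, with value $I$ at $0^\pm$ iff $\gamma$ has value $I$ there. For $\rho$, since $\lambda\in\D^\pm$ iff $\bar\lambda\in\D^\pm$, the loop $\overline{\gamma(\bar\lambda)}$ inherits the same holomorphic extension and normalization as $\gamma$. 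In particular, $\sigma$ and $\rho$ preserve the subgroups $\G^{\C\pm}$ and $\G^{\C\pm}_*$ as well as their real forms $\G^\pm$ and $\G^\pm_*$.

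Next, given $\gamma\in\G^\C$ lying in the classical big cell $\Lambda^-\SLC\cdot\Lambda^+\SLC$, write the unique factorization $\gamma=\gamma_-\gamma_+$ with $\gamma_-(\infty)=I$. Applying $\sigma$ produces $\gamma=\sigma(\gamma_-)\sigma(\gamma_+)$, again with $\sigma(\gamma_-)(\infty)=I$; uniqueness of the classical factorization then forces $\sigma(\gamma_\pm)=\gamma_\pm$, so that $\gamma_\pm\in\G^{\C\pm}$. This identifies $\B_L^\C$ with $\G^\C$ intersected with the classical big cell, which is open in $\G^\C$, and the restricted multiplication map is a real analytic diffeomorphism onto its image because it is the restriction of the classical biholomorphism to the fixed-point set of the real analytic involution $\sigma$. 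Imposing in addition the constraint $\rho(\gamma)=\gamma$ and repeating the argument yields part (1). The analogous statements for $\B_R^\C$ and $\B_R$ follow after swapping the normalization point from $\infty$ to $0$.

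The main obstacle is density. The complement of the classical big cell in $\Lambda\SLC$ is described in \cite{PreS} as the vanishing locus of a holomorphic Fredholm determinant on the associated Grassmannian, hence a proper analytic subvariety; intersecting with $\G^\C$ or with $\G$ yields the zero locus of the restriction of this determinant, which is a proper real analytic subvariety and therefore nowhere dense in the connected manifold $\G^\C$ (respectively $\G$). Since this density argument, together with the verification that the factorization depends real analytically on $\gamma$ within the fixed-point subgroup, is handled in complete generality in \cite{branderdorf}, Proposition~1, for arbitrary finite-order twistings, I would simply invoke that result to conclude the proof rather than reproduce it here.
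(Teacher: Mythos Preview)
Your proposal is correct and follows exactly the route the paper takes: the paper does not give a proof but simply cites \cite{PreS} for the classical Birkhoff decomposition of $\Lambda\SLC$ and then \cite{branderdorf}, Proposition~1, for the restriction to the fixed-point subgroups under $\sigma$ and $\rho$. Your sketch of why the automorphisms preserve the $\Lambda^\pm$ factors and how uniqueness forces the factorization to restrict is precisely the content of that cited proposition, so you have essentially unpacked the reference the paper invokes.
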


In particular, any element $\hat F$ in the \emph{big cell} $\B=\B_L\cap\B_R$ has exactly two decompositions 
\begin{equation} \label{birkhoffsplit}
\hat F = \hat F_- \hat H_+ = \hat F_+ \hat H_-,\ \text{ where }\
  \hat F_\pm \in \G^\pm_*,\ \text{ and }\   \hat H_\pm \in \G^\pm,
\end{equation}
and the factors $\hat F_\pm$ and $\hat H_\pm$ depend real analytically on $\hat F$.

\begin{definition} \label{potentialdefn}
Let $I_x$ and $I_y$ be two real intervals, with coordinates $x$ and $y$ respectively. A \emph{potential pair} $(\chi, \psi)$ is a pair of smooth 
$Lie(\G)$-valued 1-forms on $I_x$ and $I_y$ respectively with Fourier expansions in $\lambda$ as follows:
\begin{eqnarray*}
\chi = \sum_{j=-\infty}^1 \chi_i \lambda^i \dd x,\qquad \psi = \sum_{j=-1}^\infty \psi_i \lambda^i \dd y.
\end{eqnarray*}
The potential pair is called \emph{regular} if $[\chi_1]_{21}\neq 0$ and $[\psi_{-1}]_{12}\neq0$.
\end{definition}

The following theorem is a straightforward consequence of Theorem \ref{birkhoff}. 
\begin{theorem} \label{dpwthm}
\begin{enumerate}
\item
Let $M$ be a simply connected subset of $\real^2$ and $\hat F: M \to \B \subset \G$ an admissible frame.  The pointwise (on $M$) Birkhoff decomposition \eqref{birkhoffsplit} of $\hat F$ results in a potential pair $(\hat F_+^{-1} \dd \hat F_+ \, , \, \hat F_-^{-1} \dd \hat F_-)$,
of the form
$$
\hat F_+^{-1} \dd \hat F_+ = \chi_1 \lambda \, \dd x,\qquad
\hat F_-^{-1} \dd \hat F_- = \psi_{-1} \lambda^{-1} \, \dd y.
$$
\item
Conversely, given any potential pair, $(\chi, \psi)$, define $\hat F_+: I_x \to \G$ and $\hat F_-: I_y \to \G$ by  integrating the differential equations
\begin{eqnarray*}
\hat F_+^{-1} \dd \hat F_+ = \chi, & \hat F_+(x_0) = I,\\
\hat F_-^{-1} \dd \hat F_- = \psi, & \hat F_-(y_0) = I.
\end{eqnarray*}
Define $\hat \Phi=\hat F_+^{-1} \hat F_- : I_x \times I_y \to \G$, and set 
$M = \hat \Phi^{-1}(\B_L)$. Pointwise on $M$, perform the Birkhoff decomposition $\hat \Phi = \hat H_- \hat H_+$, where $\hat H_-: M \to \gggg^-_*$ and $\hat H_+ : M \to \G^+$.
Then $\hat F = \hat F_- \hat H_+^{-1}$ is an admissible frame.
\item In both items (1) and (2), the admissible frame is
regular if and only if the corresponding potential pair is regular. Moreover,
with notation as in Definitions \ref{admissibleframedef} and  \ref{potentialdefn},
we have $\textup{sign}[A_1]_{21} = \textup{sign}[\chi_1]_{21}$ and
$\textup{sign}[A_{-1}]_{12} = \textup{sign}[\psi_{-1}]_{12}$. In fact, we have 
$$
\hat F^{-1}\dd \hat F=\lambda\chi_1\dd x+\alpha_0+\lambda^{-1}\hat H_+\big|_{\lambda=0}\psi_{-1}\hat H_+^{-1}\big|_{\lambda=0}\dd y,
$$
where $\alpha_0$ is constant in $\lambda$. 
\end{enumerate}
\end{theorem}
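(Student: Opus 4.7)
The plan is to address the three parts in sequence, exploiting uniqueness of the Birkhoff decomposition (Theorem \ref{birkhoff}) together with the rigidity of admissibility: for an admissible frame, the $dx$-component of $\hat F^{-1}d\hat F$ has only non-negative powers of $\lambda$, and the $dy$-component only non-positive powers. This rigidity should propagate through the Birkhoff factors.

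For part (1), my first task is to show that in the $\B_L$-decomposition $\hat F = \hat F_-\hat H_+$ the factor $\hat F_-$ depends only on $y$ (and symmetrically $\hat F_+$ in the $\B_R$-decomposition depends only on $x$). Fix $(x_0,y_0)\in M$ and hold $y=y_0$. Since $\hat F^{-1}\partial_x\hat F = \lambda A_1 + \alpha_0(\partial_x)$ is polynomial in $\lambda$ of degree one, the unique solution $\tilde H(x)$ of $\tilde H^{-1}\partial_x\tilde H = \hat F^{-1}\partial_x\hat F|_{y=y_0}$ with $\tilde H(x_0)=I$ takes values in $\G^+$. The identity
\[
\hat F(x,y_0)=\hat F(x_0,y_0)\tilde H(x)=\hat F_-(x_0,y_0)\bigl(\hat H_+(x_0,y_0)\tilde H(x)\bigr)
\]
is itself a $\B_L$-decomposition, so uniqueness gives $\hat F_-(x,y_0)=\hat F_-(x_0,y_0)$. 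Next I would compute, from $\hat F = \hat F_-\hat H_+$,
\[
\hat F_-^{-1}\partial_y\hat F_- = \Ad_{\hat H_+}\!\bigl(\hat F^{-1}\partial_y\hat F\bigr)+\hat H_+\,\partial_y\hat H_+^{-1},
\]
substitute $\hat F^{-1}\partial_y\hat F = \alpha_0(\partial_y)+\lambda^{-1}A_{-1}$, and observe that the only strictly negative $\lambda$-power on the right-hand side is $\lambda^{-1}$, with coefficient $h_0 A_{-1}h_0^{-1}$ where $h_0=\hat H_+|_{\lambda=0}$. Since the left-hand side has only strictly negative powers (because $\hat F_-(\infty)=I$), the non-negative powers on the right must cancel, yielding $\hat F_-^{-1}d\hat F_- = \lambda^{-1}\psi_{-1}(y)\,dy$. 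A symmetric argument gives $\hat F_+^{-1}d\hat F_+ = \lambda\chi_1(x)\,dx$.

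For part (2), starting from a potential pair $(\chi,\psi)$ I construct $\hat F_\pm$ by integration, form $\hat\Phi=\hat F_+^{-1}\hat F_-$, Birkhoff decompose $\hat\Phi = \hat H_-\hat H_+$ on $M=\hat\Phi^{-1}(\B_L)$, and set $\hat F = \hat F_-\hat H_+^{-1} = \hat F_+\hat H_-$. Computing $\hat F^{-1}d\hat F$ from both representations yields
\[
\hat F^{-1}d\hat F = \Ad_{\hat H_-^{-1}}\chi+\hat H_-^{-1}d\hat H_- = \Ad_{\hat H_+}\psi+\hat H_+\,d\hat H_+^{-1}.
\]
Since $\chi$ has only a $dx$-component with highest power $\lambda^1$ and $\hat H_-\in\G^-_*$ has only non-positive powers with $\hat H_-(\infty)=I$, the first representation forces the $dx$-part of $\hat F^{-1}d\hat F$ to have highest power at most $\lambda^1$ and its $dy$-part to have only strictly negative powers. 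The second representation symmetrically bounds the $dy$-part from below by $\lambda^{-1}$ and restricts the $dx$-part to non-negative powers. Intersecting these constraints gives $\hat F^{-1}d\hat F = \lambda A_1\,dx + \alpha_0 + \lambda^{-1}A_{-1}\,dy$ for some 1-form $\alpha_0$ constant in $\lambda$, so $\hat F$ is admissible. Reading off the extreme coefficients (using $\hat H_-|_{\lambda=\infty}=I$ and $h_0 = \hat H_+|_{\lambda=0}$) gives $A_1=\chi_1$ and $A_{-1}=h_0\psi_{-1}h_0^{-1}$.

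For part (3), both $\hat H_+|_{\lambda=0}$ and $\hat H_-|_{\lambda=\infty}$ are diagonal (the $\sigma$-twisting makes off-diagonal entries odd in $\lambda$, hence vanishing at $\lambda=0$ and $\lambda=\infty$) and real (by $\rho$-invariance), so each has the form $\mathrm{diag}(r,r^{-1})$ with $r\in\real\setminus\{0\}$. Conjugation by such a matrix multiplies the $(1,2)$- and $(2,1)$-entries of an off-diagonal matrix by $r^{\pm 2}>0$, so the sign assertions $\mathrm{sign}[A_1]_{21}=\mathrm{sign}[\chi_1]_{21}$ and $\mathrm{sign}[A_{-1}]_{12}=\mathrm{sign}[\psi_{-1}]_{12}$ follow from the formulas of parts (1) and (2), together with the equivalence of regularity. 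I expect the main technical obstacle to be the $\lambda$-bookkeeping in part (2): one must verify that the upper bound from one Birkhoff representation and the lower bound from the other really do intersect to produce a tight polynomial window with exactly the admissible shape, and that the extreme coefficients are correctly pinned down by the normalizations of $\hat H_\pm$.
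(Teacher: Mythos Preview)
Your argument is correct and is precisely the ``straightforward'' verification the paper has in mind: the paper does not give a proof of this theorem at all, merely stating that it is a consequence of the Birkhoff decomposition (Theorem~\ref{birkhoff}). Your $\lambda$-power bookkeeping---bounding the Fourier support of $\hat F^{-1}\dd\hat F$ from above via one Birkhoff factorisation and from below via the other, then reading off the extreme coefficients using the normalisations $\hat H_-|_{\lambda=\infty}=I$ and $h_0=\hat H_+|_{\lambda=0}$ diagonal---is the standard route and matches the paper's explicit formula in item~(3). The observation in part~(3) that $h_0$ is real diagonal (by the $\sigma$- and $\rho$-twisting) and hence conjugation by it preserves signs of off-diagonal entries is exactly what is needed; there is no gap.
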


Note that $\hat F$ in item (3) above is not in general an extended coordinate frame for the timelike CMC surface constructed from $\hat F$ via the Sym formula. With notation as in Proposition \ref{characterizationprop}, $\hat F_C=\hat FT$ is an extended coordinate frame for this surface, and 
$$
\hat F_C^{-1}\dd\hat F_C=\lambda T^{-1}\chi_1T\dd x+\alpha_0+T^{-1}\dd T+\lambda^{-1}T^{-1}\hat H_+(0)\psi_{-1}\hat H_+(0)^{-1}T\dd y.
$$

\section{The geometric Cauchy problem}  \label{tcmcgcpsection}

In this section we show how to construct a unique timelike conformally immersed CMC surface from appropriate data along a curve.  We treat two cases; first the case where the curve is
non-characteristic for the PDE (here meaning not a null curve), where there is a unique solution
of the geometric Cauchy problem, and next the case of a null curve, where one instead 
needs two curves to obtain a unique solution.   We do not consider curves of ``mixed type''.

\subsection{The case of non-characteristic curves}

Given a timelike immersion $f:M\to\LL^3$ and a local null coordinate system $(x,y)$ on $M$, the associated \emph{Lorentz isothermal coordinate system}
is defined by 
$$
u = \frac{1}{2}\left(x-y\right), \hspace{1cm} v= \frac{1}{2}\left(x+y\right).
$$
Note that, even though the null directions are well-defined, the directions determined by $\partial_u$ and $\partial_v$ are not, as they depend on the choice of null coordinates. With respect to the associated isothermal coordinates, a conformal metric $\dd s^2 = e^\omega \dd x \, \dd y$ is  of the form 
$$
\dd s^2 = e^\omega(-\dd u^2 + \dd v^2).
$$

The first problem which we shall solve is one where the following data is given:\\
\noindent\textbf{Geometric Cauchy data:} An open interval $J \subset \real$ containing $0$,  a regular smooth curve $f_0: J \to \LL^3$, which is either timelike or spacelike, and a regular smooth vector field $V: J \to \LL^3$ along $f_0$, with the condition that the vector fields $\frac{\dd f_0}{\dd t}(t)$ and $V(t)$ are everywhere orthogonal and
$$
\ip{V(t)}{V(t)} =-\biggl\langle\frac{\dd f_0}{\dd t}(t),\frac{\dd f_0}{\dd t}(t)\biggr\rangle\qquad(t\in J).
$$
Note that our assumptions on $V$ are equivalent to prescribing a family of timelike tangent planes along the curve $f_0(t)$.\\
\noindent \textbf{Non-characteristic geometric Cauchy problem:} Find a timelike CMC $H$-surface which contains the curve $f_0$ and is tangent along this curve to the plane spanned by $\dd f_0/\dd t$ and $V$.\\

\subsection{Existence and uniqueness} 

We state and prove the existence and uniqueness of the solution for the case that $f_0$ is timelike.  A similar result, with the obvious changes in statement and proof, holds for the case that $f_0$ is spacelike.

\begin{theorem} \label{gcptheorem}
Assume that geometric Cauchy data as above are given, with $f_0$ timelike, and let $H\neq 0$. 
Set $J^- = \{y \in \real ~|~ - y \in J \}$. Let $M = J \times J^-$, with coordinates $(x,y)$, and set 
$$
\Delta= \{(x,-x)\ |\ x\in J\} \subset J \times J^-. 
$$ 
Then:
\begin{enumerate}
\item
There is an open  subset $W \subset M$, which contains $\Delta$, and a unique timelike CMC $H$ immersion $f: W \to \LL^3$, with Lorentz isothermal coordinates $(u,v) = (\frac{1}{2}(x-y), \frac{1}{2}(x+y))$, satisfying $$
f(u,0) = f_0(u)\qquad\text{and}\qquad\frac{\partial f}{\partial v}(u,0) = V(u).
$$
\item  The surface so constructed is the unique, in the sense defined in Section \ref{resultssection}, solution to the non-characteristic geometric Cauchy problem with the orientation
given by $\frac{\dd f_0}{\dd t} \wedge V$.
\end{enumerate}
\end{theorem}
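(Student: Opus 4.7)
The plan is to encode the Cauchy data as a potential pair in the sense of Definition \ref{potentialdefn} and run the d'Alembert construction of Theorem \ref{dpwthm}. Parametrise $\Delta$ by $t$ with $(x,y) = (t,-t)$; since $\partial_x = \tfrac12(\partial_u + \partial_v)$ and $\partial_y = \tfrac12(\partial_v - \partial_u)$, the data determine
$$
f_x\big|_\Delta = \tfrac{1}{2}\bigl(f_0' + V\bigr), \qquad f_y\big|_\Delta = \tfrac{1}{2}\bigl(V - f_0'\bigr),
$$
which are null and satisfy $2\ip{f_x}{f_y} = -\ip{f_0'}{f_0'} > 0$, in view of the orthogonality and norm conditions imposed on $V$. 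I may therefore take $\varepsilon = \varepsilon_1 = \varepsilon_2 = 1$ with $e^\omega = -\ip{f_0'}{f_0'}$; the prescribed orientation $f_0' \wedge V$ fixes the unit normal $N$ along $\Delta$, and formulas \eqref{coordframe} produce an $\SLR$-valued coordinate frame $F : \Delta \to \SLR$.

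Because $f_x, f_y$ are null, one has $\ip{f_{xy}}{f_x} = \ip{f_{xy}}{f_y} = 0$, so $f_{xy}$ is normal, and the CMC hypothesis gives $f_{xy} = \tfrac12 H \varepsilon e^\omega N$ along $\Delta$. Consequently $f_{xx}|_\Delta = \partial_t f_x + f_{xy}$ and $f_{yy}|_\Delta = -\partial_t f_y + f_{xy}$ are determined from the data, yielding the Hopf scalars $Q$ and $R$ on $\Delta$; since $Q_y = R_x = 0$ by \eqref{compat2}, these promote to well-defined functions $Q : J \to \R$ and $R : J^- \to \R$. Substituting $\omega, Q, R, H$ into \eqref{withlambda} gives $\hat U, \hat V$ along $\Delta$, and integrating the ODE $\hat F^{-1} \partial_t \hat F = (\hat U - \hat V)|_\Delta$ with the initial condition $\hat F(0,0)|_{\lambda=1} = F(0)$ yields a map $\hat F : \Delta \to \G$ whose evaluation at $\lambda = 1$ restricts to $F$.

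Choosing the initial value $\hat F(0,0)$ so that it lies in the big cell $\B_L \cap \B_R$ (and absorbing the resulting rigid motion of $\LL^3$ at the end of the argument), openness gives an interval around $0$ on which both Birkhoff decompositions of Theorem \ref{birkhoff} hold simultaneously, producing factors $\hat F_+(t) \in \G^+_*$ and $\hat F_-(-t) \in \G^-_*$ depending on $t$ alone. Their Maurer--Cartan forms $\chi := \hat F_+^{-1} d\hat F_+$ and $\psi := \hat F_-^{-1} d\hat F_-$ are pure potentials by Theorem \ref{dpwthm}(1), and they are regular because $[A_1]_{21}$ and $[A_{-1}]_{12}$ are nonzero on $\Delta$ (as $H \neq 0$ and $e^\omega > 0$). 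Feeding $(\chi, \psi)$ into Theorem \ref{dpwthm}(2) produces a regular admissible frame on an open set $W \subset J \times J^-$ containing $\Delta$, and Proposition \ref{characterizationprop} converts it into a timelike CMC $H$ immersion $f : W \to \LL^3$. Applying the appropriate ambient isometry to match $f(0) = f_0(0)$ and the orientation, the resulting surface meets the prescribed geometric Cauchy data on $\Delta$, establishing existence.

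For uniqueness, any second solution $\tilde f$ with the same Cauchy data produces the same $F, \omega, Q, R$ along $\Delta$, so its extended frame $\hat{\tilde F}|_\Delta$ satisfies the same ODE with the same initial value (after applying the ambient isometry) and therefore coincides with $\hat F|_\Delta$; its Birkhoff factors then match $(\chi, \psi)$, and Theorem \ref{dpwthm}(2) forces $\tilde f = f$ in the sense of Section \ref{resultssection}. The main obstacle I anticipate is controlling the domain $W$: because $\G$ is a real form of a loop group with non-compact fibre, $\B_L$ is open but not dense, so one must argue that the Birkhoff decomposition survives along the entire curve $\Delta$, not merely near $t = 0$, and then combine that with continuity in $t$ and openness of $\B_L$ to produce the two-dimensional neighbourhood $W \supset \Delta$.
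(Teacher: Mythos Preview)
Your approach to Item~1 mirrors the paper's: build the extended frame $\hat F_0$ along $\Delta$ from the data, Birkhoff-split to extract a potential pair, reconstruct an admissible frame via Theorem~\ref{dpwthm}, and argue uniqueness by matching the normalized potentials. Two remarks are in order.

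First, the obstacle you flag at the end is genuine but easily dispatched, and your own uniqueness argument is the key. The paper localizes at each $t_0 \in J$, normalizing so that $\hat F_0(t_0,-t_0) = I \in \B$; openness then gives the Birkhoff splitting on a subinterval of $\Delta$ around $t_0$, hence a local solution there. These local solutions agree on overlaps by precisely the uniqueness argument you wrote, and patching yields a solution on an open set containing all of $\Delta$. (Incidentally, $\B_L$ \emph{is} dense in $\G$ by Theorem~\ref{birkhoff}; the only issue is that it is not all of $\G$.)

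Second, and more seriously, you have not addressed Item~2. Your uniqueness argument presupposes that the competing solution $\tilde f$ already comes with null coordinates in which the initial curve is the set $\{v=0\}$; but Item~2 concerns an \emph{arbitrary} solution of the geometric Cauchy problem, in arbitrary coordinates. The paper supplies the missing step: since $f_0$ is timelike, in any local null chart $(x,y)$ on $\tilde M$ it is a graph $y = h(x)$ with $h'(x) < 0$, and the null coordinate change $(\tilde x, \tilde y) = (-h(x), y)$ brings the curve to $\tilde v = 0$. The orthogonality and norm conditions on $V$ then force $\partial_{\tilde v} f|_{\tilde v=0} = \pm V$, with the orientation hypothesis selecting the sign; Item~1 then applies.
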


\begin{proof} 
\textbf{Item 1:}
We first show that there is a local solution in a neighbourhood of any point in $\Delta$, and then that any two solutions agree at points where they are both defined.

\textbf{Local existence:} Fix a point $t_0\in J$. Without loss of generality, we may assume that $f^\prime(t_0)$ is a multiple of $e_0$ and $V(t_0)$ a multiple of $e_2$.
We seek a solution such that $t$ corresponds to the coordinate $u$ along $v=0$.
Define the function $\omega_0:J\to\real$ by $e^{\omega_0(t)}=\ip{V(t)}{V(t)}$, and the map $F_0:J\to\SLR$ by 
$$
\frac{\dd f_0}{\dd t}=e^{\omega_0/2}\Ad_{F_0}(e_0),\quad V=e^{\omega_0/2}\Ad_{F_0}(e_1),\quad F_0(t_0)=I.
$$

Assume that $f$ is a solution with a coordinate frame $F$, satisfying $F(u,0)=F_0(u)$, and an extended coordinate frame $\hat F$ constructed as in Section \ref{extendedframesection}. Along $\Delta$,  we then have 
\beqas
\hat F^{-1} \hat F_u &= &\hat U - \hat V\\
 &=&  \frac{1}{4} \bbar - \omega_x - \omega_y & -4 Q e^{-\omega/2} \lambda + 2 H e^{\omega/2} \lambda^{-1} \\
  2 H e^{\omega/2} \lambda - 4 R e^{-\omega/2} \lambda^{-1} & \omega_x + \omega_y \ebar.
\eeqas

At $\lambda =1$, this should agree with
$$
F_0^{-1} (F_0)_u = \bbar a & b \\ c & -a \ebar,
$$
where $a$, $b$ and $c$ are known functions on $\Delta$. Hence we have $Q-R = -\frac{1}{4} e^{\omega/2}(b-c)$ and $Q+R = \ip{f_{xx}+f_{yy}}{N}= \ip{f_{uu} + 2 f_{xy}}{N}=\ip{f_{uu}}{N} + H e^\omega$, which give us the formulae
\begin{equation}\label{eq:QR}
\begin{split}
Q &= -\frac{1}{8} e^{\omega/2}(b-c) + \frac{1}{2} \ipb{\frac{\dd^2 f}{\dd u^2}}{N}
  + \frac{1}{2}H e^\omega,\\
R &= \frac{1}{8} e^{\omega/2}(b-c) + \frac{1}{2} \ipb{\frac{\dd^2 f}{\dd u^2}}{N} + \frac{1}{2} H e^\omega.
 \end{split}
 \end{equation}
These are all known functions along $\Delta$, the normal $N$ being given by $\Ad_{F_0}(e_2)$, and the diagonal components of $\hat F^{-1}  \hat F_u$ along $\Delta$ are given by 
$a$ and $-a$ respectively. 

Thus, by defining $Q$ and $R$ along $\Delta$ by \eqref{eq:QR}, we obtain a map $\hat F_0:\Delta\to\G$ with $\hat F_0(x_0,x_0)=I$ and $\hat F_0(x,-x)\big\vert_{\lambda=1}=F_0(x)$. There is an open interval $J'$ containing $x_0$ with $\hat F_0(\Delta')\subset\B$, where $\Delta'=\{(x,y)\in\Delta\ |\ x\in J'\}$. Performing the left and right normalized Birkhoff decompositions on $\Delta'$ gives $\hat F_0 = \hat F^0_- \hat G^0_+ = \hat F^0_+ G^0_-$.
It follows from the construction of $\hat F_0$ that 
$$
(\hat F^0_-)^{-1} \dd \hat F^0_- = \psi_{-1} \lambda^{-1} \dd u,\qquad (\hat F^0_+)^{-1} \dd \hat F^0_+ = \chi_{1} \lambda^{1} \dd u,
$$
where $\psi_{-1}$ and $\chi_1$ do not depend on $\lambda$. Hence $\hat F_- (y) = \hat F^0_-(-y)$, and $\hat F_+ (x) = \hat F^0_+(x)$, correspond to a potential pair $\psi = \hat F_-^{-1} \dd \hat F_-$ and $\chi = \hat F_+^{-1} \dd \hat F_+$, on $I^\prime_x = \{x \in \real ~|~ (x,-x) \in  J^\prime \}$ and  $I^\prime_y = \{y \in \real ~|~ (-y,y) \in J^\prime \}$, respectively.  
 
As in  the second part of Theorem \ref{dpwthm}, define $\hat \Phi: I^\prime_x \times I^\prime_y \to \G$ by the expression $\hat \Phi(x,y) := \hat F_+(x)^{-1} \, \hat F_-(y)$; since $\hat \Phi(x_0, -x_0) = I$, there is an open set $W \subset M$, with $(x_0,-x_0) \in W$, such that $\hat \Phi (W) \subset \mathcal{B}_L$. Performing a left normalized Birkhoff splitting of $\hat \Phi$ on $W$ gives $\hat\Phi=\hat H_-\hat H_+$, and we have an admissible frame $\hat F=\hat F_-\hat H_+^{-1}: W \to \G$. 

By construction, this frame agrees with $\hat F_0(u)$ along $W \cap \Delta$, and is therefore regular along this set; hence, taking $W$ sufficiently small, regular on $W$. From the Sym formula we thus conclude that there is a solution defined in a neighbourhood of $(x_0,-x_0)$. 

\textbf{Global existence and uniqueness for item 1:} We have shown that we can cover $\Delta$ with open sets $W$ on each of which there is a solution of the geometric Cauchy problem.

Suppose $f: W \to \LL^3$ and $\tilde f: \tilde W \to \LL^3$ are two local solutions, with $W \cap \tilde W\cap\Delta\neq\emptyset$. Let $(x_0,-x_0)=(u_0,0)$ be a point in this set and $U\subset W\cap\tilde W$ a contractible neighbourhood of this point. Without loss of generality, we can assume that $\frac{\dd f_0}{\dd x}(x_0)=\alpha^2e_0$ and $V(x_0)=\alpha^2e_1$.  For each surface we have a unique extended frame,
respectively denoted by $\hat F$ and $\tilde{\hat F}$, with $\hat F(x_0,-x_0)=\tilde{\hat F}(x_0,-x_0)=I$. Since these are in the big cell $\B$ in a neighbourhood $U_1\subset U$ of $(x_0,-x_0)$, we may perform (normalized) left and right Birkhoff decompositions
$$
\hat F=\hat F_-\hat G_+=\hat F_+\hat G_-,\quad \hat{\tilde F}=\tilde{\hat F}_-\tilde{\hat G}_+=\tilde{\hat F}_+\tilde{\hat G}_-.
$$
It follows from the first part of the argument in the existence proof, that $\hat F_- = \tilde {\hat F}_-$ and $\hat F_+ = \tilde{\hat F}_+$ in $U_1\cap\Delta$, since these are completely determined from the geometric Cauchy data along $\Delta$.
Consequently $\hat F_-(-y) = \tilde {\hat F}_-(-y)$, for $(y,-y) \in U_1$, and  $\hat F_+(x) = \tilde{\hat F}_+(x)$ for $(x,-x) \in U_1$. Thus the normalized potentials for the two surfaces are identical here, and the surfaces therefore are identical on $U_1$. Hence we have a well-defined solution on some open set containing $\Delta$. It also follows from the argument just given that the solution is unique.\\
\noindent \textbf{Item 2:}
Now suppose that $\tilde f: \tilde M \to \LL^3$ is an arbitrary solution of the geometric Cauchy problem.
To show that it agrees with the solution in item 1, it is enough to show that, locally,
 there exists a choice of null coordinates $(x,y)$, with
corresponding isothermal coordinates $(u,v)$, such that the 
curve $f_0$ is given by $v=0$.  It then automatically follows that
$f_v(u,0)=V(u)$, because in such coordinates we can write $V=af_u+bf_v$; but then $a$ is zero because $f_u$ is tangent to the 
curve, to which $V$ is assumed to be orthogonal.  It then follows from
the condition $\left< V, V \right> = -\left< \dd f_0/\dd t, \dd f_0/\dd t \right>$ that $b=\pm 1$. Our assumption on the orientation
of the solution implies that $b=1$.

To show that the required coordinates exist, observe that, since 
the curve $f_0$ is timelike, it can,  in local null coordinates
$(x,y)$ for $\tilde M$, be expressed as a graph $y=h(x)$, where 
$h^\prime(x) < 0$. But then the coordinates $(\tilde x, \tilde y) = (-h(x), y)$ are also null, with the same orientation,  and the curve is given by $\tilde v= (\tilde x + \tilde y)/2=0$.
\end{proof}


\subsubsection{The boundary potential pair}  
The construction given in  the proof of Theorem \ref{gcptheorem} is not very practical,
as one needs to perform a Birkhoff decomposition to get the potential pair $(\chi, \psi)$. Below we show how to obtain an alternative potential pair directly from the geometric Cauchy data.  Again we describe only the case when $f_0$ is timelike; the analogue holds for a spacelike initial curve, using $\hat F_+(x) = \hat F_0(x)$ and  $\hat F_-(y) = \hat F_0(y)$, in lieu of the definitions below.

\begin{theorem} \label{bptheorem}
Assume that the geometric Cauchy data are given, and let $H\neq0$, $J^-$, $M= J \times J^-$
and $\Delta$ be as in in Theorem \ref{gcptheorem}.  Let $\hat F_0: J \to \gggg$
be the extended frame along $\Delta$ constructed as before. Set $\hat F_+(x) = \hat F_0(x)$ and $\hat F_-(y) = \hat F_0(-y)$. Then 
\begin{enumerate}
\item
$(\chi, \psi)= ( \hat F_+^{-1} \dd \hat F_+, \hat F_-^{-1} \dd \hat F_-)$
is a potential pair on $J \times J^-$, and is regular on an open set containing $\Delta$. 
\item
Set $\hat \Phi = \hat F_+^{-1} \hat F_- : J \times J^- \to \gggg$, and 
$M^\circ = \hat \Phi^{-1} (\mathcal{B}_L)$. The surface $f: M^\circ \to \LL^3$
obtained from the corresponding admissible frame $\hat F$ from Theorem \ref{dpwthm} is a solution of the given geometric Cauchy problem.
\end{enumerate}
\end{theorem}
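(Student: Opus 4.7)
The strategy is to verify directly that $(\chi,\psi)$ is a regular potential pair and that the admissible frame $\hat F$ produced by Theorem \ref{dpwthm} from this pair restricts to $\hat F_0$ on $\Delta$. Once this is in hand, Proposition \ref{characterizationprop} combined with the way $\hat F_0$ was constructed in the proof of Theorem \ref{gcptheorem} will imply that the Sym-formula surface satisfies the prescribed Cauchy data.

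For item (1), I would start from the identity $\hat F_0^{-1}\,\dd\hat F_0 = (\hat U-\hat V)\big|_\Delta\,\dd x$, which holds along $\Delta$ because $u=x=-y$ there. The explicit formulae \eqref{withlambda} show that this 1-form has Fourier expansion only in $\{\lambda^{-1},\lambda^0,\lambda^1\}$. Since $\hat F_+(x)=\hat F_0(x)$, the same holds for $\chi$, which therefore has Fourier degree bounded above by $+1$. The chain rule applied to $\hat F_-(y)=\hat F_0(-y)$ gives $\psi = -(\hat U-\hat V)(-y)\,\dd y$, again with degrees in $\{-1,0,1\}$; in particular $\psi$ is bounded below by $\lambda^{-1}$. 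Thus both forms satisfy the degree conditions of Definition \ref{potentialdefn}. The leading coefficients $[\chi_1]_{21}$ and $[\psi_{-1}]_{12}$ come from the off-diagonal entries of $\hat U_1$ and $\hat V_{-1}$ and are, up to sign, equal to $\tfrac{1}{2} H e^{\omega_0/2}$, hence nonzero on all of $\Delta$ since $H\neq 0$; by continuity the pair is regular on an open neighbourhood of $\Delta$ in $J\times J^-$.

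For item (2), the crucial observation is that along $\Delta$,
$$\hat\Phi(x,-x) \;=\; \hat F_+(x)^{-1}\hat F_-(-x) \;=\; \hat F_0(x)^{-1}\hat F_0(x) \;=\; I.$$
The Birkhoff decomposition of the identity is trivial, so $\hat H_+\big|_\Delta = I$, which forces the admissible frame $\hat F = \hat F_-\hat H_+^{-1}$ to coincide with $\hat F_0$ along $\Delta$. Proposition \ref{characterizationprop} then presents $f=\tfrac{1}{2H}\sym_1(\hat F)$ as a timelike CMC $H$ surface whose extended coordinate frame is $\hat F_C = \hat F T$ for a diagonal, $\lambda$-constant matrix $T$; the explicit leading coefficients computed in item (1) give $|b_2/c_1|=1$ along $\Delta$, so $T=I$ there and $\hat F_C\big|_\Delta = \hat F_0$. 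In view of \eqref{coordframe} and the definition of $\hat F_0$, the produced surface has the correct tangent plane along $\Delta$, spanned by $f_0^\prime$ and $V$.

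The main technical nuisance will be tracking the sign conventions $\varepsilon_1,\varepsilon_2$ and the orientation through the reparameterization $y\mapsto -y$ defining $\hat F_-$, in order to confirm the identities $f(u,0)=f_0(u)$ and $\partial_v f(u,0)=V(u)$ exactly (and not merely up to a sign or reflection). A tidy shortcut is to note that the construction just described yields a timelike CMC $H$ surface which contains $f_0$ with tangent plane spanned by $f_0^\prime$ and $V$ along it; the uniqueness part of Theorem \ref{gcptheorem} then forces the surface to coincide with the one built there, which by construction solves the geometric Cauchy problem.
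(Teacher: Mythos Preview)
Your proposal is correct and follows essentially the same route as the paper: the key step for item (2) is exactly the observation that $\hat\Phi(x,-x)=\hat F_0(x)^{-1}\hat F_0(x)=I$, so the Birkhoff factor $\hat H_+$ is trivial on $\Delta$ and $\hat F\big|_\Delta=\hat F_0$. Your treatment of item (1) and the verification that $T=I$ along $\Delta$ are more detailed than the paper's (which simply declares item (1) ``clear from the construction'' and concludes immediately once $\hat F\big|_\Delta=\hat F_0$), but these are elaborations of the same argument rather than a different approach.
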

\begin{proof}
Item (1) is clear from the construction.  To prove (2), we note that $\hat F$ is obtained by Birkhoff decomposing $\hat F_+^{-1} \hat F_- = \hat H_- \hat H_+$, and then setting $\hat F = \hat F_- \hat H_+^{-1}$.  Along $\Delta = \{v=x+y=0\}$, we have $u=x=-y$, so that $\hat F_+(x)^{-1} \hat F_-(y) = \hat F_0(x)^{-1} \hat F_0(-y) = I$, and thus $H_+ = I$ along $\Delta$.  Hence we have $\hat F(x,-x) = \hat F_-(-x) = \hat F_0(u)$ along  $\Delta$, which shows that $f$ is a solution of the geometric Cauchy problem.
\end{proof}

\subsection{The geometric Cauchy problem with null initial curve}
Next we consider the case where the initial curve $f_0$ is a null curve.

\noindent\textbf{Null geometric Cauchy data:} An open interval $J \subset \real$ containing $0$,  a regular smooth curve $f_0: J \to \LL^3$, which is everywhere null, that is $\left< \frac{\dd f_0}{\dd x} , \frac{\dd f_0}{\dd x} \right> = 0$, and a regular smooth \emph{null} vector field $V: J \to \LL^3$, such that $\left<  \frac{\dd f_0}{\dd x}, V \right> >0$ on $J$.

Note that the vector field $V$ carries more information than just specifying a family of timelike tangent planes along the curve, as $\frac{\dd f_0}{\dd x}$ and $V$ determine the conformal factor of the metric along the curve. However, even with this information, we do not have a unique solution in the null case. 

\begin{theorem}\label{th:GCPnull}
Let $I_y\subset\real$ be an interval containing $0$ and $\alpha,\beta:I_y\to\real$ two smooth functions, with $\alpha(0)\neq 0$. Let $H \neq 0$. Given null geometric Cauchy data on an interval $I_x$, together with the functions $\alpha$ and $\beta$, then  there is an open set $U\subset I_x\times I_y$, containing
the set $I_x \times \{0\}$, and a unique timelike CMC $H$ surface $f:U\to\LL^3$ with null coordinates $(x,y)$ such that 
\begin{equation}  \label{confconditions}
f(x,0) = f_0(x)\qquad\text{and}\qquad
    \frac{\partial f}{\partial y} (x,0) = V(x).
\end{equation}  
Conversely, given any local solution of the geometric Cauchy problem satisfying \eqref{confconditions}, there is a unique pair of such functions $\alpha$ and $\beta$ from which the solution is constructed. 
\end{theorem}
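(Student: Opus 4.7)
The plan is to adapt the strategy of Theorem \ref{bptheorem} to the characteristic situation. In the non-characteristic case, the curve $\Delta$ meets both null directions transversely, so the extended frame along $\Delta$ produces both factors of a potential pair by distributing $\hat F_0^{-1}\dd \hat F_0 = (\hat U - \hat V)\dd u$ into $\dd x$ and $\dd y$ contributions. In the null case the curve lies along $y=0$, so only the $\hat U$-part of the Maurer--Cartan form of an eventual extended frame is visible along the curve, and the $\hat V$-part, responsible for $\hat F_-$, must be prescribed externally. This is the role of $(\alpha,\beta)$: once one forces the normalized shape $\psi = \psi_{-1}(y)\lambda^{-1}\dd y$, the $\sigma$- and $\rho$-constraints make $\psi_{-1}$ off-diagonal and real, so two scalar functions of $y$ are exactly what is needed to specify $\hat F_-$.

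For existence, I would first use the conditions that $f_0'$ and $V$ are null with $\ip{f_0'}{V}>0$ to construct a unique $F_0:I_x \to \SLR$ along $y=0$ (with $F_0(0)=I$ fixed), satisfying the coordinate frame equations \eqref{coordframe} with $\varepsilon_1=\varepsilon_2=1$ so as to reproduce $f_0'$ and $V$; this fixes $e^{\omega_0} = 2\ip{f_0'}{V}$ and the normal $N_0 = \Ad_{F_0}e_2$. Inserting $\omega_0$, $H$ and $Q_0 := \ip{f_0''}{N_0}$ into \eqref{withlambda} yields $\hat U|_{y=0}$, and I define $\hat F_+:I_x\to\G$ by integrating $\hat F_+^{-1}\dd \hat F_+ = \hat U|_{y=0}\,\dd x$ from $\hat F_+(0)=I$. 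Independently, I define $\hat F_-:I_y\to\G$ by integrating $\hat F_-^{-1}\dd \hat F_-=\psi_{-1}\lambda^{-1}\dd y$ from $\hat F_-(0)=I$, where $\psi_{-1}(y)$ is the real off-diagonal matrix whose two entries are $\alpha(y)$ and $\beta(y)$; the hypothesis $\alpha(0)\neq 0$ guarantees regularity. Item (2) of Theorem \ref{dpwthm} applied to the potential pair $(\hat U|_{y=0}\,\dd x,\ \psi_{-1}\lambda^{-1}\dd y)$ then yields an admissible frame $\hat F$ on the open set $U := \hat\Phi^{-1}(\B_L)$, from which the Sym formula together with Proposition \ref{characterizationprop} produces the timelike CMC $H$ surface $f:U\to\LL^3$.

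The verification along $y=0$ and both directions of the uniqueness statement hinge on the trivialization $\hat F_-(0)=I$: this forces $\hat\Phi(x,0) = \hat F_+(x)^{-1} \in \G^+ \subset \B_L$, so the normalized left Birkhoff decomposition of $\hat\Phi|_{y=0}$ is $I\cdot \hat F_+^{-1}$, from which $\hat F|_{y=0}=\hat F_+$ and, after the diagonal gauge $T$ of Proposition \ref{characterizationprop}, the coordinate frame along $y=0$ equals $F_0$, giving $f(x,0)=f_0(x)$ and $f_y(x,0)=V(x)$. By continuity of the Birkhoff projection, $U$ contains $I_x\times\{0\}$. For the converse, any local solution of the null geometric Cauchy problem has an extended coordinate frame whose restriction to $y=0$ is uniquely determined by the Cauchy data; performing the normalized left Birkhoff decomposition of that frame and invoking the standard gauge argument behind Theorem \ref{dpwthm} yields an $\hat F_-$ depending only on $y$, and its potential $\psi_{-1}(y)\lambda^{-1}\dd y$ has off-diagonal entries furnishing the unique $(\alpha,\beta)$. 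The main obstacle, beyond careful bookkeeping along the lines of Theorem \ref{bptheorem}, is tracking the diagonal gauge $T$ relating admissible frames to extended coordinate frames so that $f_y(x,0)=V(x)$ holds exactly, and not merely up to a non-trivial conformal rescaling.
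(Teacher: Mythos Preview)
Your proposal is correct and follows essentially the same route as the paper: the paper likewise builds $F_0$ from the null data, obtains $\chi$ by writing $F_0^{-1}(F_0)_x = ae_0+be_1+ce_2$ and inserting $\lambda$ on the off-diagonal (which is exactly your $\hat U|_{y=0}$), pairs it with the normalized $\psi$ determined by $(\alpha,\beta)$, and invokes Theorem~\ref{dpwthm}. The paper's verification that the constructed surface solves the problem is the one-line ``by construction $\hat F(x,0)=\hat F_0(x)$'', so your more explicit Birkhoff argument along $y=0$ and your flagging of the diagonal gauge $T$ as the residual bookkeeping point are accurate elaborations of a step the paper leaves terse.
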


\begin{proof} Without loss of generality we may assume that $\frac{\dd f_0}{\dd x}(0)=\frac{1}{2}(e_0+e_1)$ and $V(0)=\frac{1}{2}(-e_0+e_1)$. 
Define a function $\omega_0:I_x\to\real$ by 
$$
\frac{e^{\omega_0}}{2}=\ipb{\frac{\dd f_0}{\dd x}}{V},
$$
and $F_0:I_x\to\SLR$ by 
$$
 \frac{\dd f_0}{\dd x} = \dfrac{1}{2} e^{\omega_0/2} \Ad_{F_0} (e_0 + e_1),\quad
V = \dfrac{1}{2} e^{\omega_0/2} \Ad_{F_0}(-e_0 + e_1),\quad F_0(0)=I.
$$
Let us write $F_0^{-1}(F_0)_x=ae_0+be_1+ce_2$ and set $\chi=(\lambda(ae_0+be_1)+ce_2)\dd x$. We integrate $\hat F_0^{-1} \dd \hat F_0 = \chi$ with initial condition $\hat F_0(0) = I$ to obtain a map $\hat F_0: J \to \G$. Clearly, if $f$ is any solution to the geometric Cauchy problem satisfying \eqref{confconditions}, and $\hat F$ is the extended coordinate frame with $\hat F(0,0) = I$, and coordinates chosen so that $\varepsilon_1=\varepsilon_2 = 1$, 
 then $\hat F(x,0) = \hat F_0(x)$. Thus, setting 
$$
\psi = \bbar 0 & \alpha \lambda^{-1} \\ \beta \lambda^{-1} & 0 \ebar \dd y,
$$
 then $(\chi, \psi)$ is a regular potential pair, and by Theorem \ref{dpwthm}, this pair corresponds to an admissible frame $\hat F$.  By construction, $\hat F(x,0) = \hat F_0(x)$, and the corresponding surface solves the geometric Cauchy problem. 

Finally, since $\psi$ is a \emph{normalized} potential, it is uniquely determined by an extended frame for the solutions of the geometric Cauchy problem. Thus, for any choice of $\psi$, there is a unique solution of the geometric Cauchy problem. 
\end{proof}

\begin{remark}
In the above proof, we have $\hat F(x,0) = \hat F_0(x)$ and $\hat F(0,y) = \hat F_-(y)$, where $\hat F_-^{-1} \dd \hat F_- = \psi$, and $\hat F_-(0) = I$. Thus it is necessary and sufficient to specify the geometric Cauchy data along \emph{two} (intersecting) null curves in order to obtain a unique solution to the problem in the null case.
\end{remark}

\begin{remark} We can write down the general solution quite explicitly: without loss of generality, we can assume that
\beqas
\frac{\dd f_0}{\dd x}(x)&=&\frac{1}{2}s(x)(e_0+ \cos(\theta(x))e_1 + \sin(\theta(x))e_2),\\
V(x)&=&\frac{1}{2}t(x)(-e_0+ \cos(\theta(x))e_1 + \sin(\theta(x))e_2),
\eeqas
where $s$, $t$ and $\theta$ are positive real-valued functions with $t(0)=s(0)=1$. Then $e^\omega=st$ 
and one finds
$$
F_0=\bbar r \cos(\theta/2) & -r^{-1}\sin(\theta/2) \\ r \sin(\theta/2) & r^{-1} \cos(\theta/2) \ebar,
$$ 
where $r=\sqrt{t/s}$. A simple calculation now gives
$$
\chi = \bbar \frac{r_x}{r} & -\frac{\theta_x}{2 r^2} \lambda \\ \frac{r^2 \theta_x}{2} \lambda & -\frac{r_x}{r}\ebar \dd x.
$$
A similar geometric expression can be given for $\psi$ in terms of data along $I_y$.
\end{remark}

\section{Surfaces of revolution}  \label{revsection}
Timelike surfaces of revolution in $\LL^3$ come in three types, according to the causal character of the axis which is fixed by the revolution.  The differential equations defining the possible profile curves for the three cases are given by R L\'opez \cite{lopez2000}. These are nonlinear and solutions have only been found by numerical methods.  

Below we will consider the problem of finding potential pairs for surfaces of revolution.  Knowledge of these potentials can be used to construct new examples of timelike CMC surfaces, as has been done for the case of CMC surfaces in $\real^3$
(see, for example, \cite{dorfwu2008, krs}).

We will work out the potentials for the null axis and timelike axis cases. The case with a 
spacelike axis can be done in a similar way.

\subsection{Rotations in $\LL^3$}
First we describe the $\SLR$ matrices corresponding to rotations about the three types of axes.  Without loss of generality, we can take the timelike axis  given by $e_0$, the spacelike by $e_1$, and the null axis by $e_0 +e_1$. It is easy to see that the rotation matrices, up to sign, are given respectively by 
$$
T(t)=\bbar \cos \frac{1}{2}t &  \sin \frac{1}{2}t \\ - \sin \frac{1}{2}t & \cos \frac{1}{2}t \ebar, \quad
S(t)=\bbar \cosh  \frac{1}{2}t &  \sinh  \frac{1}{2}t \\ \sinh  \frac{1}{2}t & \cosh  \frac{1}{2}t \ebar,\quad
L(t)=\bbar 1 & 0 \\ t & 1 \ebar.
$$

\subsection{Constructing the surfaces via the geometric Cauchy problem}

To find the potential pairs corresponding to all surfaces of revolution with a given type of axis, it is enough to solve the geometric Cauchy problem along a ``circle" generated by the corresponding rotation, with all possible choices of prescribed tangent plane. Since the surface is to be invariant under the relevant rotation, the vector field $V$ for the geometric Cauchy problem is determined at a single point on the circle.

\subsubsection{Timelike axis}

Let us consider the timelike axis in the direction of $e_0$.  Given a circle of radius $\rho$ centered on this axis, we may assume that this is given by 
$$
f_0(t) = \rho(\sin t \, e_1 + \cos t \,  e_2  ) =  \rho\Ad_{T (t)}(e_2).
$$
Since $f_0'(0)=\rho e_1$, we may assume that the vector field $V(t)$ for the geometric Cauchy problem satisfies $V(0)=\rho e_0$, so that $V(t)=\rho\Ad_{T(t)}(e_0)=\rho e_0$. Hence 
$$
e^{\omega_0}=\ipb{\frac{\dd f_0}{\dd t}}{\frac{\dd f_0}{\dd t}}=\rho^2.
$$
From the equations 
$$
\frac{\dd f_0}{\dd t}=e^{\omega_0/2}\Ad_{F_0}(e_1),\quad V(t)=e^{\omega_0/2}\Ad_{F_0}(e_0),\quad F_0(0)=I,
$$
we see that $F_0(t)=T(t)$. For a coordinate frame $F(u,v)$, in isothermal coordinates,
 for the solution of the geometric Cauchy problem, we should have $F(0,v)=F_0(v)$, so that  
$$
\frac{1}{2}\bbar 0 & 1 \\ -1 & 0 \ebar=U + V = \frac{1}{4} \bbar -\omega_x + \omega_y & -4 Q e^{-\omega/2} - 2 H e^{\omega/2} \\
  2 H e^{\omega/2} + 4 R e^{-\omega/2} & \omega_x - \omega_y \ebar,
$$
where $\omega(0,v)=\omega_0(v)$. Solving this, with $e^{\omega/2} = \rho$, we obtain $Q = R = -\frac{1}{2} \rho(1 + \rho H)$. 
Therefore, we set 
\begin{equation*}
\begin{split}
\hat A =& \bbar 0 & -Q \rho^{-1} \lambda  - \frac{1}{2}H \rho \lambda^{-1} \\
  \frac{1}{2}H \rho \lambda + R \rho^{-1} \lambda^{-1} & 0 \ebar \\
 =& \frac{1}{2} \bbar 0 & (1+ \rho H) \lambda - H \rho \lambda^{-1} \\
     H \rho \lambda - (1+\rho H) \lambda^{-1} & 0 \ebar,
\end{split}
\end{equation*}
and define $\hat F_0$ by integrating $\hat F_0^{-1}\dd\hat F=\hat A\dd v$ with $F_0(0)=I$. 
As discussed in Section \ref{dpwsection}, the boundary potentials are given 
by  
$$
\chi = \hat A \dd x,\qquad \psi = \hat A \dd y.
$$
Since $\hat A$ is constant in $x$ and $y$, these integrate to $\hat F_+(x)=\exp(\hat Ax)$ and $\hat F_-(y)=\exp(\hat Ay)$, respectively. Setting $\hat \Phi=\hat F_+(x)^{-1}\hat F_-(y)$ and performing a normalized Birkhoff decomposition of $\hat \Phi$ into $\hat H_-\hat H_+$, the extended frame for the solution is given by $\hat F=\hat F_+ \hat H_-$. 

\begin{figure}[here] 
\begin{center}
\includegraphics[height=26mm]{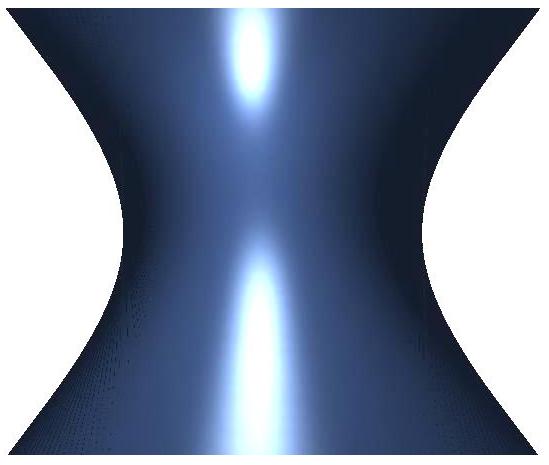}\hspace{1cm}
\includegraphics[height=26mm]{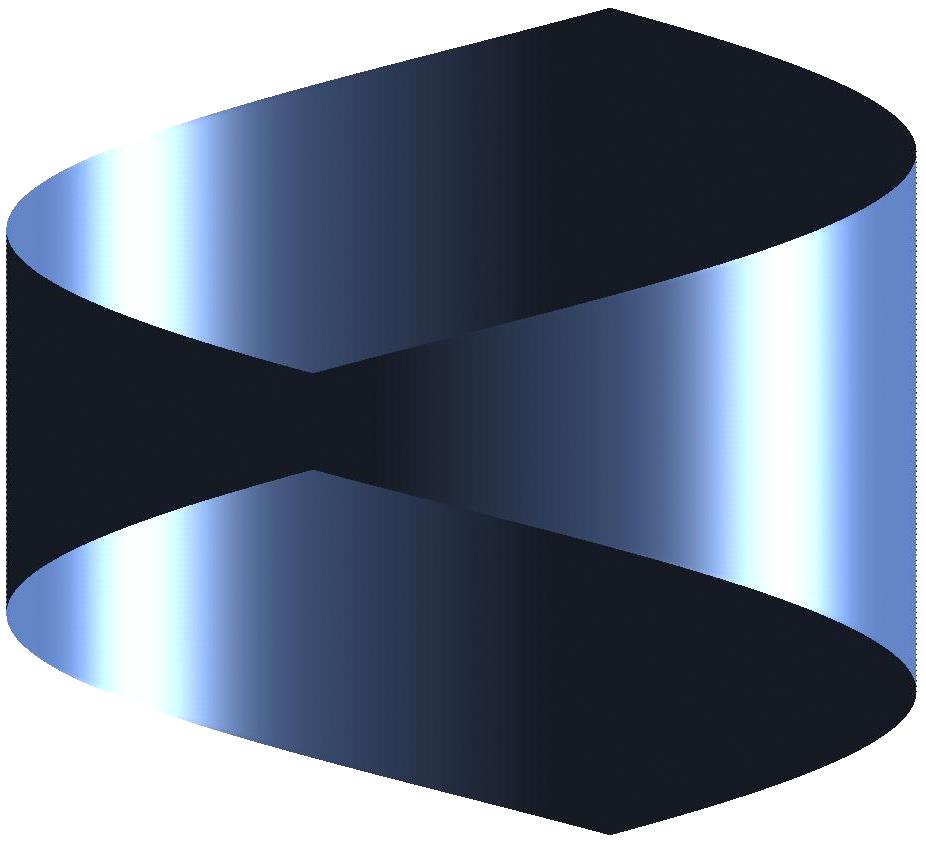} \hspace{1cm}
\includegraphics[height=26mm]{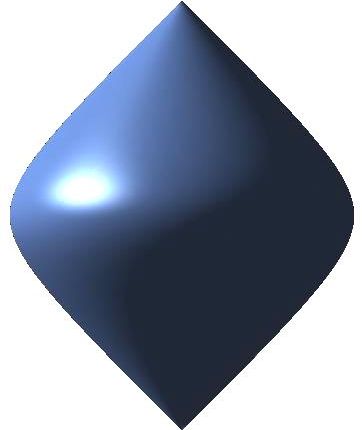} 
\end{center}
\caption{Partial plots of the typical timelike CMC surfaces of revolution with timelike axis, computed from the geometric Cauchy data on a circle of radius $\rho$. Left: $\rho H=-1$.  Center $\rho H=-1/2$.
Right: $\rho H=1$.  } \label{sorfig}
\end{figure}

\begin{example}
In general, the Birkhoff decomposition cannot be written down explicitly. 
The typical solutions have been computed numerically here (Figure \ref{sorfig}).
However, the special case when $\rho H=-1/2$, which is a transition point of
the topological type of the solution, can be worked out explicitly.
We have $\hat A = -\frac{1}{4} (\lambda +  \lambda^{-1})e_0$
and in this case,  
\beqas
\hat \Phi(x,y) &=& \exp\left(\frac{1}{4}(\lambda + \lambda^{-1})(x-y)e_0\right)\\
&=&\exp\left(\frac{\lambda^{-1}}{4}(x-y)e_0\right)\exp\left(\frac{\lambda}{4}(x-y)e_0\right)=\hat H_-\hat H _+.
\eeqas
Hence, we have the extended frame
$$
\hat F (x,y)=\hat F_-(y) \hat H_+^{-1} (x,y) = \bbar \cos \left(\frac{\lambda x + \lambda^{-1}y}{4}\right) &  \sin \left(\frac{\lambda x + \lambda^{-1}y}{4}\right) \\ - \sin \left(\frac{\lambda x + \lambda^{-1}y}{4}\right) & \cos \left(\frac{\lambda x + \lambda^{-1}y}{4}\right) \ebar.
$$
Then we can compute $\sym(\hat F) = \frac{1}{2}((y-x) e_0 - \sin(x+y) e_1 - \cos(x+y) e_2)$,
and since we are in the case $H= -\frac{1}{2\rho}$,  the surface is given by
\begin{eqnarray*}
f(x,y) & =& \frac{1}{2H} \sym_1 F(x,y) 
= \rho \frac{1}{2} ((x-y) e_0 + \sin(x+y) e_1 - \cos(x_y) e_2)\\
&=& \rho [ u, \sin (v), \cos(v)],
\end{eqnarray*}
a right circular cylinder in $\real^3$ of radius $\rho = -\dfrac{1}{2H}$.
\end{example}

\subsubsection{Null axis}

Consider now the null axis $e_0+e_1$, and a curve $f_0(t)$ which we can assume
is a null curve of the form
$$
f_0(t)=\Ad_{L(t)}(ae_0+be_1-ce_2), \quad a, ~b,~ c ~ \textup{constant}, ~c>0.
$$
Note that we cannot apply an isometry of $\LL^3$ to simplify $f_0$ and $V$ as
we did in the proof of Theorem \ref{th:GCPnull}, because this would move the
aixs of the rotation.

Since $f_0'(t)=\Ad_{L(t)}(c(e_0+e_1)+(-a+b)e_2)$, we see that $f_0$ is null if and only if $a=b$, so that $f_0'(t)=\Ad_{L(t)}(c(e_0+e_1))=c(e_0+e_1)$. To find a timelike CMC surface of revolution around the null axis containing this curve, we specify a null vector field $V(t)$ along this curve, invariant under $L(t)$. We can assume that it is given by 
$$
V(t)=\Ad_{L(t)}(A e_0+B e_1+C e_2), \quad A, ~ B, ~ C ~
  \textup{constant}, ~B-A\geq0.
$$
where we require $-A^2+B^2+C^2=0$. As before, set 
$$
\frac{e^{\omega_0}}{2}=\ipb{\frac{\dd f_0}{\dd t}}{V}=c(B -A). 
$$
Hence $\dfrac{e^{\omega_0/2}}{2}=\sqrt{\dfrac{c(B-A)}{2}}$. Next we find a map $F_0$ along the curve satisfying 
$$
f_0'(t)=\frac{e^{\omega_0/2}}{2}\Ad_{F_0(t)}(e_0+e_1),\quad V(t)=\frac{e^{\omega_0/2}}{2}\Ad_{F_0(t)}(-e_0+e_1).
$$
Writing $F_0(t)=\begin{pmatrix} x & y \\ z & w \end{pmatrix}$, we see that
$$
c(e_0+e_1)=\frac{e^{\omega/2}}{2}\Ad_{F_0(t)}(e_0+e_1)=2\sqrt{\frac{c(B-A)}{2}}\begin{pmatrix} yw & -y^2 \\ w^2 & -yw \end{pmatrix}.
$$
Thus, $y=0$ and $x^{-1}=w=\pm (2c/(B-A))^{1/4}$.
 We may choose the plus sign here, and, after scaling the coordinate of the curve, we may also assume that $2c=B-A$, so that $x=w=1$.
Finally, from the two expressions for $V(t)$, we obtain $z=t+C/(2c)$. A translation 
of the coordinate $t$, allows us to take $C=0$, so that
$$
F_0(t)=\begin{pmatrix} 1 & 0 \\ 
t  & 1  \end{pmatrix}.
$$
which gives, 
$$
F_0^{-1}(F_0)_t=\begin{pmatrix} 0 & 0 \\ 1 & 0 \end{pmatrix}.
$$
As in Theorem \ref{th:GCPnull}, we multiply by $\lambda$ and exponentiate to obtain 
$\hat F_+(x)$, and  all solutions are obtained by choosing two arbitrary 
functions $\alpha(y)$ and $\beta(y)$, with $\alpha(0)\neq 0$, and 
the potential pair:
$$
\chi = \begin{pmatrix} 0 & 0 \\  \lambda & 0 \end{pmatrix} \dd x,
 \quad \psi = \lambda^{-1}\begin{pmatrix} 0 & \alpha \\ \beta & 0 \end{pmatrix}\dd y.
$$

\begin{example}
Taking $\alpha=1$ and $\beta=0$, gives $\hat F_-(y)=\begin{pmatrix} 1 & \lambda^{-1}y \\ 0 & 1 \end{pmatrix}$. In the (left) normalized Birkhoff decomposition $\hat F_+^{-1}\hat F_-=\hat H_-\hat H_+$, it follows easily that 
$$
\hat H_-=\begin{pmatrix} 1 & \frac{y\lambda^{-1}}{1- xy} \\ 0 & 1 \end{pmatrix}.
$$
Hence 
$$
\hat F=\hat F_+\hat H_-=\begin{pmatrix} 1 & \dfrac{y\lambda^{-1}}{1- xy} \\ \lambda  x & \dfrac{1}{1- xy} \end{pmatrix}.
$$
From the Sym formula, (choosing $H=1/2$) we get the surface 
$$
f(x,y)=\frac{1}{1- xy}(2( x+y)e_0+2( x-y)e_1-(3 xy+1)e_2).
$$
\end{example}

\section{Pseudospherical surfaces in Euclidean 3-space}  \label{ksurfsection}

\subsection{The loop group formulation}

Let $D\subset\real^2$ be a simply connected domain. An immersion $f:D\to\E^3$ is said to be a  \emph{pseudospherical surface} if it has constant sectional curvature $-1$. Following Bobenko \cite{bobenko1994}, let $x$ and $y$ be asymptotic coordinates for $f$, not necessarily arc length coordinates. Let $\phi$ denote the oriented angle between
$f_x$ and $f_y$.
The first and second fundamental forms of $f$ are given by  
$$
I=|f_x|\dd x^2+2\cos(\phi)\dd x\dd y+|f_y|\dd y^2,\quad II=2|f_x||f_y|\sin(\phi)\dd x\dd y,
$$
and the equations of Gauss and Codazzi-Mainardi reduce to
$$
\phi_{xy}-|f_x||f_y|\sin(\phi)=0,\quad \partial_y|f_x|= \partial_x|f_y|=0.
$$
Set $\theta=\phi/2$ and 
\begin{equation}\label{eq:E1E2}
E_1 = \frac{1}{2 \cos(\theta)}\biggl(\frac{f_x}{|f_x|} + \frac{f_y}{|f_y|}\biggr), \quad E_2 = -\frac{1}{2 \sin(\theta)}\biggl(\frac{f_x}{|f_x|}- \frac{f_y}{|f_y|}\biggr).
\end{equation}
 It is easy to see that $E_1$ and $E_2$ are unit principal vector fields for the surface. 
 
To obtain a loop group formulation, we identify $\E^3$ with the Lie algebra $\su$ with the orthonormal basis 
\bdm
e_1=\frac{1}{2}\bbar 0 & i \\ i & 0\ebar, \quad
e_2 =\frac{1}{2}\bbar 0 & -1 \\ 1 & 0\ebar, \quad
e_3 = \frac{1}{2}\bbar i & 0 \\ 0 & -i\ebar.
\edm
A \emph{Darboux frame} for $f$ is a map $F:D\to\SU$ adapted to the principal directions in the sense that
\beq \label{ksurfcoordframe}
E_1 = \Ad_F e_1, \quad E_2 = \Ad_F e_2,  \quad N = \Ad_F e_3.
\eeq
We assume that coordinates are chosen so that $F(0,0)=I$. Let $U =F^{-1} F_x $ and $V = F^{-1} F_y$. A simple calculation shows that 
\beq \label{ksurfconnection}
 U  = \frac{i}{2} \bbar -\theta_x &  |f_x|e^{-i\theta}  \\ 
 |f_x| e^{i\theta}   & \theta_x \ebar, \quad
V = \frac{i}{2} \bbar \theta_y & -|f_y| e^{i \theta}   \\ -|f_y| e^{-i \theta}  & -\theta_y \ebar.
\eeq
Let us introduce the matrices
\beq  \label{mcfksurf}
\hat U  = \frac{i}{2} \bbar -\theta_x &  |f_x| e^{-i\theta} \lambda \\ |f_x| e^{i\theta} \lambda   & \theta_x \ebar, \quad \hat V = \frac{i}{2} \bbar \theta_y & -|f_y| e^{i \theta} \lambda^{-1}  \\ -|f_y| e^{-i \theta} \lambda^{-1} & -\theta_y \ebar,
\eeq
where $\lambda\in\C\setminus\{0\}$. The equations of Gauss and Codazzi-Mainardi imply that $-\hat U_x+\hat V_y+[\hat U, \hat V]=0$, and integrating $\hat F^{-1} \dd \hat F = \hat U \dd x + \hat V \dd y$ with $\hat F(0,0)=I$, we obtain an \emph{extended Darboux frame}, that is, a map $\hat F:D\to\Lambda\SLC_{\sigma \rho}$ with $\hat F\big\vert_{\lambda=1}=F$. Here $\sigma$ is the twisting involution defined previously, and $\rho$ is defined by
\bdm
(\rho \gamma)(\lambda) = \left(\overline{\gamma(\bar \lambda)}^T\right)^{-1}.
\edm
From $\hat F$ we obtain from the \emph{Sym formula} a family of pseudospherical surfaces
\beq \label{sym2}
f^\lambda = \lambda\frac{\partial \hat F}{\partial \lambda} \hat F^{-1}\qquad(\lambda\in\real\setminus\{0\}).
\eeq
It follows easily that $f^1$ coincides with $f$ up to a rigid motion of $\E^3$. 

\begin{definition}  \label{admissibleframedef2}
Let $M$ be a simply connected subset of $\real^2$, and let $(x,y)$ denote the standard 
coordinates. An \emph{admissible frame} is a smooth map $\hat F: M \to \Lambda\SLC_{\sigma \rho}$  such that the Maurer-Cartan form of $\hat F$ has the form
\begin{equation}  \label{admissible-K}
\hat F^{-1} \dd \hat F =\lambda\, A_{1}\, \dd x + \alpha_0 + \lambda^{-1}A_{-1}\, \dd y,
\end{equation}
where the coefficients $A_1$ and $A_{-1}$ and the $\su$-valued 1-form $\alpha_0$ are
constant in $\lambda$.  The admissible frame $\hat F$ is said to be \emph{weakly regular} if $[A_1]_{12}\neq 0$ and $[A_{-1}]_{12}\neq 0$. The frame is said to be \emph{regular} if 
it is weakly regular and $\textup{Arg}([A_1]_{12}) - \textup{Arg}([A_{-1}]_{12}) \neq k \pi$ for any integer $k$.
\end{definition}

Note that the reality condition given by $\rho$ means that the matrices $A_i$ are in $\mathfrak{su}(2)$. 
Hence, for a weakly regular frame, all the off-diagonal 
components of $A_{\pm1}$ are non-zero.  Given a weakly regular admissible frame $\hat F$, 
it is straightforward to verify (as in the proof of Proposition \ref{characterizationprop})
that by multiplying $\hat F$ on
the right with a  matrix-valued function of the form $T = \textup{diag}(e^{i\mu}, e^{-i \mu})$, we can 
bring the Maurer-Cartan form $\hat F^{-1} \dd \hat F$ 
into the form of (\ref{mcfksurf}),
with $2 \theta \in [0,\pi)$. If the frame is \emph{regular}, then one also has that 
$2 \theta \in (0,\pi)$.   In this case the analogue of Proposition \ref{characterizationprop} holds, in the sense that regular admissible frames correspond precisely 
to pseudospherical surfaces.

\begin{definition} \label{potentialdefn2}
Let $I_x$ and $I_y$ be two real intervals, with coordinates $x$ and $y$, respectively. A \emph{potential pair} $(\chi, \psi)$ is a pair of smooth 
$\Lambda\slC_{\sigma \rho}$-valued 1-forms on $I_x$ and $I_y$ respectively with Fourier expansions in $\lambda$ as follows:
\begin{eqnarray*}
\chi = \sum_{j=-\infty}^1 \chi_i \lambda^i \dd x,\qquad \psi = \sum_{j=-1}^\infty \psi_i \lambda^i \dd y.
\end{eqnarray*}
The potential pair is called \emph{weakly regular} if $[\chi_1]_{12}\neq 0$ and $[\psi_{-1}]_{12}\neq0$.
\end{definition}
Note that, again, if the pair is weakly regular then one also has
 $[\chi_1]_{21}\neq 0 \neq [\psi_{-1}]_{21}$.  It is straightforward
 to verify that the analogue of Theorem \ref{dpwthm} holds in this situation, with the only essential difference being that, as is shown in  \cite{jgp},  the big cell is the whole group for $\Lambda\SLC_{\sigma \rho}$ so that $\Phi^{-1}(\mathcal{B}_L)$ is
 the whole of $M$. However, a weakly regular potential pair only
 produces a weakly regular admissible frame;  there is no guarantee that 
 the corresponding pseudospherical surface is everywhere regular.

\subsection{Solution of the geometric Cauchy problem}  \label{ksurfsolsection}
As with timelike CMC surfaces, we will find that the problem splits into two quite different
situations, that of non-characteristic curves (non-asymptotic curves) and characteristic 
curves (asymptotic curves).  The results are broadly similar to the timelike CMC case, but 
with important differences.

\subsubsection{The non-characteristic case}
Recall that a curve $f_0$ on a surface with unit normal $N$ is asymptotic if
and only  $\ip{f_0'}{N'}=0$. For the first case that we consider we assume that the following is given:\\
\noindent \textbf{Non-characteristic Geometric Cauchy data:} An open interval $J \subset \real$ containing $0$, 
a  regular smooth map $f_0: J \to \E^3$,
 and a regular smooth vector field $N_0: J \to  \E^3$, which is everywhere orthogonal to $f_0^\prime$, and such that 
$$
\ip{f_0'}{N_0'}\neq0.
$$
\\
\noindent \textbf{The Geometric Cauchy problem:} Find a pseudospherical surface $f: D \to \E^3$, where $D$ is an open subset of $\real^2$
containing $J$, such that  $f \big|_J = f_0$ and the normal to the surface along $J$
is given by $N_0$.\\

The non-characteristic problem splits further into two subcases; however these are 
\emph{not} analogous to the split into timelike/spacelike cases encountered 
earlier. The two cases are when $f_0'$ and $N_0'$ are everywhere parallel, and when $f_0'$ and $N_0'$ are nowhere parallel, in other words when $f_0$ is everywhere or is never tangent to a
principal direction.
 

Given a pseudospherical surface $f:D\subset\real^2\to\E^3$, with coordinates and Darboux frame as described above, let us define new coordinates by
\bdm
u=\frac{1}{2}(x-y), \quad v=\frac{1}{2}(x+y).
\edm
Then 
\begin{equation}\label{eq:Fu}
F^{-1}F_u=U-V=\frac{i}{2}\begin{pmatrix} -\theta_v & |f_x|e^{-i\theta}+|f_y|e^{i\theta} \\ |f_x|e^{i\theta}+|f_y|e^{-i\theta} & \theta_v\end{pmatrix}.
\end{equation}
where, by \eqref{ksurfconnection}, we have 
\begin{equation}\label{eq:thetav}
\theta_v=\ip{(E_2)_u}{E_1}.
\end{equation}

Let $\alpha=|f_x|-|f_y|$ and $\beta=|f_x|+|f_y|$. From \eqref{eq:E1E2} we have 
$$
f_u=f_x-f_y=\alpha\cos(\theta) E_1-\beta\sin(\theta) E_2
$$
and from \eqref{ksurfconnection} we have 
$$
N_u=\Ad_F([U-V,e_3])=-\alpha\sin(\theta) E_1-\beta\cos(\theta) E_2.
$$
Hence we have 
\beqas
\ip{f_u}{N_u}=\frac{(\beta^2-\alpha^2)}{2}\sin(\phi),\\ |N_u|^2-|f_u|^2=(\beta^2-\alpha^2)\cos(\phi),\\
 |N_u|^2+|f_u|^2=\beta^2+\alpha^2.
\eeqas
It follows that 
$$
|f_u|^2|N_u|^2-\ip{f_u}{N_u}^2=2\alpha^2\beta^2.
$$
As $\beta>0$, we see that $\alpha=0$ if and only if $f_u$ and $N_u$ are parallel, that is, if and only if the curve $v=0$ is a principal curve on the surface. If this is the case, then 
\beqa\label{eq:beta1}
\beta= 2|f_x| = 2 |f_y|=  \sqrt{|f_u|^2+ |N_u|^2},\\
\label{eq:phi1}
\sin(\phi) = \frac{2\ip{f_u}{N_u}}{|f_u|^2+ |N_u|^2},
\quad
\cos(\phi) = \frac{|N_u|^2-|f_u|^2 }{|f_u|^2+ |N_u|^2}.
\eeqa
From the expressions $E_2=-(\beta\sin(\theta))^{-1}f_u$ and $E_1=E_2\times N$ we obtain 
\begin{equation}\label{eq:thetav2}
\theta_v=\ip{(E_2)_u}{E_1} =\frac{1}{|f_u|^2}\ip{f_{uu}}{f_u\times N}.
\end{equation}
Thus, we see that the matrices $U$ and $V$ in \eqref{ksurfconnection} may be obtained along the curve $v=0$ entirely from knowledge of $f(u,0)$ and $N(u,0)$. 

On the other hand, if $f_u$ and $N_u$ are never parallel along $v=0$, equivalently, if $\alpha$ is non-zero everywhere along this curve, then we may assume that $\alpha>0$. Then we have
$$
E_1=\frac{1}{\alpha}(\cos(\theta) f_u-\sin(\theta) N_u),\quad E_2=-\frac{1}{\beta}(\sin(\theta) f_u+\cos(\theta) N_u), 
$$
and subsequently obtain 
\begin{gather*}
\tan(\phi) = \frac{2\ip{f_u}{N_u}}{|N_u|^2 - |f_u|^2},\\
\alpha^2 = \frac{1}{2}\left(|f_u|^2(1+\cos^{-1}(\phi)) + |N_u|^2(1-\cos^{-1}(\phi))\right), 
 \label{Aeqn}\\
\beta^2 =  \frac{1}{2}\left(|f_u|^2(1-\cos^{-1}(\phi)) + |N_u|^2(1+\cos^{-1}(\phi))\right).
\end{gather*}
Let us also assume that $\ip{f_u}{N_u}\neq0$; by changing the sign of $N$ if necessary, we may assume that $\ip{f_u}{N_u}>0$. With this assumption $\phi$ is a well defined  function with values in $(0,\pi)$, satisfying
\beq  \label{phieqn}
\cos(\phi) = \frac{Z}{\sqrt{1+Z^2}}, \quad \sin(\phi) = \frac{1}{\sqrt{1+Z^2}},\quad Z = \frac{|N_u|^2-|f_u|^2|}{2\ip{f_u}{N_u}}.
\eeq
Noting that $\cos(\phi)$ has the same sign as $(|N_u|^2-|f_u|^2$) and recalling that $\alpha>0$ and $\beta>0$, we obtain
\begin{eqnarray}
\begin{split}
\label{alphabetaeqns}
 \alpha = \frac{1}{\sqrt{2}}\left( |f_u|^2 + |N_u|^2 -\sqrt{4\ip{f_u}{N_u}^2 + \left(|N_u|^2-|f_|^2 \right)^2}\right)^{1/2},\\
\beta = \frac{1}{\sqrt{2}}\left( |f_u|^2 + |N_u|^2 +\sqrt{4 \ip{f_u}{N_u}^2 + \left(|N_u|^2-|f_u|^2\right)^2}\right)^{1/2}.
\end{split}
\end{eqnarray}
As before, we have $\theta_v = \ip{(E_2)_u}{E_1}$. From the expressions above for $E_1$ and $E_2$ and from $\theta_u=-Z_u/2(1+Z^2)$, we obtain 
\beq \label{thetaveqn}
 \theta_v = \frac{\alpha Z_u}{2 \beta (Z^2+1)} - \frac{Y}{\alpha \beta},
\eeq
where
$$
Y = \sin(\theta)\cos(\theta)\left(\ip{f_{uu}}{f_u} - \ip{N_u}{N_{uu}}\right) +  \cos^2(\theta)\ip{f_u}{N_{uu}} -  \sin^2(\theta)\ip{N_u}{f_{uu}}.
$$

It follows that, as in the case $\alpha=0$,  we can construct the matrices $U$ and $V$ in \eqref{ksurfconnection} from knowledge of $f(u,0)$ and $N(u,0)$. The following  result now follows easily.

\begin{theorem} \label{ksurftheorem1}
Let the non-characteristic geometric Cauchy data $f_0$ and $N_0$ be given, where 
$f_0^\prime$ and $N_0^\prime$ are either everywhere parallel, or nowhere parallel.
In the first case set:
\beqas
\alpha(t)= 0, \quad \beta(t) = \sqrt{|f_0^\prime|^2+|N_0^\prime|^2},
\quad
\theta_v(t)=\frac{1}{|f_0^\prime|^2}\ip{f_0^{\prime \prime}} {f_0^\prime \times N_0},\\
\cos(\theta(t))= \frac{|N_0^\prime|}{\sqrt{|f_0^\prime|^2+|N_0^\prime|^2}}, \quad
\sin(\theta(t))= \frac{|f_0^\prime|}{\sqrt{|f_0^\prime|^2+|N_0^\prime|^2}}.
\eeqas
In the second case, substitute $f_0(t)$, $N_0(t)$ and $t$ for $f$, $N$ and $u$
in the expressions at \eqref{phieqn},
\eqref{alphabetaeqns}  and \eqref{thetaveqn}, to find the expressions
for $\alpha(t)$, $\beta(t)$, $\theta(t) := \phi(t)/2$ and $\theta_v(t)$.
In either case define $\hat A(t)$ to be the loop algebra valued function:
$$
 \frac{i}{2}\bbar -\theta_v & \frac{1}{2}(\beta+\alpha)e^{-i\theta}\lambda + \frac{1}{2}(\beta-\alpha)e^{i\theta}  \lambda^{-1}\\
\frac{1}{2}(\beta+\alpha)e^{i\theta}\lambda + \frac{1}{2}(\beta-\alpha)e^{-i \theta}  \lambda^{-1} & \theta_v \ebar.
$$
Then
\begin{enumerate}
\item
The pair of 1-forms $(\chi, \psi) := (\hat A(x) \dd x, -\hat A(-y) \dd y)$ is a 
weakly regular potential pair on $J \times J^-$, where $J^-=\{y \in \real| -y \in J\}$.
\item
The pseudospherical surface obtained from $(\chi, \psi)$ is regular on an open set containing
$\Delta:= \{(x,-x)\}\subset J \times J^-$. It is the unique solution of the given geometric Cauchy problem.
\end{enumerate}
\end{theorem}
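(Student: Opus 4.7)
My approach mirrors the strategy used for timelike CMC surfaces in Theorems \ref{gcptheorem} and \ref{bptheorem}: construct an extended Darboux frame $\hat F_0$ along the initial curve directly from the geometric Cauchy data, use it to produce a boundary potential pair, apply the $\su$-version of the d'Alembert construction (Theorem \ref{dpwthm} adapted to $\Lambda\SLC_{\sigma\rho}$), and verify that the resulting surface realizes the prescribed data along $\Delta=\{(x,-x)\}$. Uniqueness will follow because the derivation that precedes the theorem shows that the Maurer--Cartan form of any Darboux frame is completely determined along $\{v=0\}$ by $f_0$ and $N_0$.

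\textbf{Step 1: The frame along the curve.} The computation leading to equations \eqref{eq:beta1}--\eqref{eq:phi1} (parallel case) and \eqref{phieqn}--\eqref{thetaveqn} (transversal case) expresses the quantities $\alpha,\beta,\theta,\theta_v$ that appear in $U-V$ in terms of $f_0(t),N_0(t)$ alone; indeed, the pieces $|f_x|,|f_y|,\phi$ are recovered from the fundamental forms computed from $f_u=f_0',N_u=N_0'$, and $\theta_v$ is the geodesic curvature expression. Hence the loop-algebra element $\hat A(t)$ in the statement is precisely what one obtains by writing $\hat U(u,0)-\hat V(u,0)$ for any hypothetical solution. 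I first check that $\hat A$ takes values in $\Lambda\slC_{\sigma\rho}$, which is immediate from the form of the matrix, and then integrate $\hat F_0^{-1}d\hat F_0=\hat A(t)\,dt$ with $\hat F_0(0)=I$ to get $\hat F_0:J\to\Lambda\SLC_{\sigma\rho}$.

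\textbf{Step 2: The potential pair and the admissible frame.} Setting $\hat F_+(x)=\hat F_0(x)$ and $\hat F_-(y)=\hat F_0(-y)$, the Maurer--Cartan forms give exactly $\chi=\hat A(x)\,dx$ and $\psi=-\hat A(-y)\,dy$. The $(1,2)$-coefficient of $\chi_1$ is proportional to $(\beta+\alpha)e^{-i\theta}$ and of $\psi_{-1}$ to $(\beta-\alpha)e^{i\theta}$; since $\beta>0$ in the parallel case and $0<\alpha<\beta$ in the transversal case, both are nonzero, so $(\chi,\psi)$ is weakly regular. Moreover, the difference of arguments of these two entries is $-2\theta$, and in each case our formulas force $\theta\in(0,\pi/2)$, so the associated admissible frame $\hat F$ obtained from Theorem \ref{dpwthm} is in fact regular. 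Along $\Delta$ we have $\hat F_+(x)^{-1}\hat F_-(-x)=\hat F_0(x)^{-1}\hat F_0(x)=I$, so the normalized Birkhoff factor $\hat H_+$ equals $I$ on $\Delta$, and therefore $\hat F(x,-x)=\hat F_-(-x)=\hat F_0(x)$.

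\textbf{Step 3: Matching the data and uniqueness.} Applying the Sym formula to $\hat F$ yields a pseudospherical immersion $f$ defined on an open set containing $\Delta$, with $f(x,-x)=f_0(u)$ up to a rigid motion (which is absorbed by translation), normal $\Ad_{F_0}e_3=N_0$, and first and second fundamental forms along $\Delta$ matching those dictated by $f_0,N_0$ because the $\alpha,\beta,\theta$ were constructed precisely to reproduce them. For uniqueness, any solution has a Darboux frame whose restriction to $\Delta$ is determined (up to left multiplication by a constant in $\SU$, which is absorbed by normalizing $F(0,0)=I$) by the geometric data through the formulas of Step 1; thus two solutions would have the same extended frame along $\Delta$, and the same boundary potential pair, hence the same admissible frame by the integration procedure. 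The main subtlety, which I expect to be the bulk of the checking, is verifying that the formula \eqref{thetaveqn} for $\theta_v$ really coincides with $\ip{(E_2)_u}{E_1}$ when $E_1,E_2$ are reconstructed from $f_0,N_0$ via \eqref{eq:E1E2}; this is a direct but careful differentiation argument using the identities for $\alpha,\beta,\phi$, and once it is in hand the regularity of $\hat F$ near $\Delta$ and the Sym-formula verification are routine.
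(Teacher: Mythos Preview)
Your proposal is correct and follows essentially the same route as the paper: the paper's proof is simply a reference back to Theorems \ref{gcptheorem} and \ref{bptheorem}, noting only that in the $\SU$ setting the big cell equals the whole loop group (so the weakly regular admissible frame is defined on all of $J\times J^-$) and that $\phi\in(0,\pi)$ along $\Delta$ forces regularity nearby. Your Steps~1--3 are exactly a fleshed-out version of that strategy; the one point you might make more explicit is the global Birkhoff decomposition in $\Lambda\SLC_{\sigma\rho}$, which is what upgrades the local construction to all of $J\times J^-$.
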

The uniqueness in item (2) should be understood in the sense described in Section 
\ref{resultssection}. 

The proof of this theorem is essentially identical to the proof of the corresponding
result for timelike CMC surfaces, see Theorems \ref{gcptheorem} and \ref{bptheorem}.
The only difference here is that the weakly regular solution is defined on the whole of 
$J\times J^-$, as the big cell is the whole group. By construction, $\phi \in (0,\pi)$
along $\Delta$, which implies regularity on an open set containing $\Delta$.

\begin{remark}\label{remarksolution}
The expression (\ref{thetaveqn}) for $\theta_v$ 
implies that the solution for the case $\alpha \neq 0$, with the choice of coordinates we seek, will not exist at 
 points where $\alpha$ does vanish, 
unless we also have the condition that $\frac{Y}{\alpha \beta}$ 
is bounded. Since $\alpha \beta = 2(|f_u|^2|N_u|^2 - \ip{f_u}{N_u}^2)$, whilst $Y$ contains higher order derivatives, 
one would have to impose a condition on the second derivatives of $f$ and $N$ in order
to extend the theorem to the situation where $\alpha$ is non-zero at some points but zero at others.  Conditions on the second derivatives of $f$ and $N$ 
also arise in the expression for $\alpha_u$, which needs to be smooth when $\alpha$ vanishes.
\end{remark}


\subsubsection{The characteristic case}  \label{asymptsection}
Now we consider the case of an asymptotic curve, that is $\ip{f_0'}{N_0'}=0$ along $J$.\\
\noindent \textbf{Characteristic Geometric Cauchy data:} An open interval $J \subset \real$ containing $0$, 
a  regular smooth map $f_0: J \to \E^3$,
 and a regular smooth vector field $N_0: J \to  \E^3$, which is everywhere orthogonal to $f_0^\prime$, and such that $f_0^\prime$ and $N_0^\prime$ are also orthogonal.
$$
\ip{f_0'}{N_0'}=0.
$$
Since the curve is necessarily an asymptotic curve of any solution to the geometric Cauchy
problem, we are looking for a solution such that $f(x,0)= f_0(x)$.

Consider a pseudospherical surface $f$ parameterized by asymptotic coordinates $(x,y)$.
By multiplying a Darboux frame for $f$ on the right by the matrix function $\textup{diag}(e^{-i\theta/2},e^{i\theta/2})$, we obtain a new frame $F$ satisfying
$$
\frac{f_x}{|f_x|} =\Ad_F(e_1),\quad \frac{1}{\sin(2\theta)}\left(-\cos(2\theta)\frac{f_x}{|f_x|}+\frac{f_y}{|f_y|}\right)=\Ad_F(e_2).
$$
A simple calculation shows that 
$$
F^{-1}\dd F=\frac{i}{2}\bbar -\phi_x & |f_x| \\ |f_x| & \phi_x \ebar\dd x+\frac{i}{2}\bbar 0 & -|f_y|e^{i\phi} \\ -|f_y|e^{-i\phi} & 0 \ebar\dd y.
$$
Hence, if 
$$
\hat U=\frac{i}{2}\bbar -\phi_x & |f_x|\lambda \\ |f_x|\lambda & \phi_x\ebar,\quad \hat V=\frac{i}{2}\bbar 0 & -|f_y|e^{i\phi}\lambda^{-1} \\ -|f_y|e^{-i\phi}\lambda^{-1} & 0 \ebar,
$$
we may integrate $\hat F^{-1}\dd \hat F=\hat U\dd x+\hat V\dd y$, with $\hat F(0,0)=I$. Clearly, $\hat F$ will be an extended Darboux frame for $f$ up to multiplication from the right by a diagonal matrix (which has no effect on the Sym formula). 

To return to the geometric Cauchy problem, assume that characteristic geometric Cauchy data $f_0$ and $N_0$ are given. Solving $f_0'/|f_0^\prime|=\Ad_{F_0}(e_1)$,
 $N_0=\Ad_{F_0}(e_3)$ with $F_0(0)=I$ (after a suitable isometry of $\E^3$),
 we see from the condition that $\ip{f_0'}{N_0'}=0$ that 
$$
F_0^{-1}\dd F_0=\bbar ia & ib \\ ib & -ia\ebar\dd x,
$$
where $a$ and $b$ are two real-valued functions on $J$.  Thus, we define the potential 
$$
\chi=\bbar ia & ib\lambda \\ ib\lambda & -ia\ebar\dd x.
$$
To find a solution, we may now freely choose a potential of the form 
$$
\psi=\bbar 0 & \alpha\lambda^{-1} \\ \bar\alpha\lambda^{-1} & 0 \ebar\dd y,
$$
where $\alpha:I_y\to\C$ is a non-vanishing function.
  It is now not difficult
to see that the analogue of Theorem \ref{th:GCPnull} holds in this case also, where 
in this case
the additional data required is a non-vanishing function $\alpha: I_y \to \C$. 
The potential pair $(\chi, \psi)$ given above
defines  a solution to the problem, where $\chi$ is constructed from the Cauchy data,
and $\psi$ is determined by $\alpha$, and conversely every solution corresponds to such a
function $\alpha$.


\subsection{Applications}
As a consequence of Theorem \ref{ksurftheorem1} we have:
\begin{corollary} \label{ksurfcor}
Let $\gamma: J \to \E^3$ be a regular, constant-speed curve with  curvature
 $\kappa$,
and torsion $\tau$.  Suppose that $\kappa$ is nonvanishing and 
that either $\gamma$ is a plane curve or 
$\tau$ is nonvanishing. Then there exists a unique regular pseudospherical surface which contains $\gamma$ as a 
geodesic and this surface can be constructed by Theorem \ref{ksurftheorem1}.
\end{corollary}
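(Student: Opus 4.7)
The plan is to apply Theorem \ref{ksurftheorem1} to the geometric Cauchy data $(f_0,N_0):=(\gamma,n)$, where $n$ denotes the unit principal normal of $\gamma$. Since $\kappa$ is nowhere zero, $n$ is globally well-defined and smooth along $\gamma$, and it is a unit field orthogonal to $\gamma'$, so $(f_0,N_0)$ is admissible as geometric Cauchy data.

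Next I would verify the hypotheses of Theorem \ref{ksurftheorem1}. Writing $c=|\gamma'|$ (constant by assumption) and $T=\gamma'/c$, the Frenet equations give $N_0'(t)=c(-\kappa T+\tau b)$, while $f_0'(t)=cT$. Hence
\[
\ip{f_0'}{N_0'}=-\kappa c^2,
\]
which is nowhere zero, so $\gamma$ is non-characteristic. The required dichotomy also holds: if $\gamma$ is a plane curve then $\tau\equiv 0$ and $N_0'=-c\kappa T$ is everywhere parallel to $f_0'$; if instead $\tau$ is nowhere zero then the binormal component $c\tau b$ of $N_0'$ is nowhere zero, so $f_0'$ and $N_0'$ are nowhere parallel. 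In either case, Theorem \ref{ksurftheorem1} yields a pseudospherical surface $f$, regular on an open neighbourhood of $\gamma$, containing $\gamma$ and with surface normal equal to $n$ along $\gamma$.

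To finish the existence half, I would verify that $\gamma$ is a geodesic of $f$. Since $\gamma$ has constant speed, $\gamma''=c^2\kappa n$, which is parallel to the surface normal $N=n$ along $\gamma$. Therefore the tangential (to the surface) component of $\gamma''$ vanishes identically, so $\gamma$ is a geodesic of $f$. For uniqueness, observe that if $\tilde f$ is any regular pseudospherical surface containing $\gamma$ as a geodesic, then its unit normal $\tilde N$ along $\gamma$ must be perpendicular to $\gamma'$ and parallel to $\gamma''=c^2\kappa n$; thus $\tilde N=\pm n$. The two sign choices give surfaces that differ by a reflection in the osculating plane at a point of $\gamma$, so the uniqueness statement from Theorem \ref{ksurftheorem1} (in the sense of Section \ref{resultssection}) transfers to this setting.

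There is essentially no hard analytic work left for the corollary — the main obstruction has already been absorbed into Theorem \ref{ksurftheorem1}. The main points to be careful about are: (i) ensuring smoothness of $n$, which requires exactly the hypothesis $\kappa\neq 0$; (ii) checking the ``everywhere/nowhere parallel'' dichotomy, which is precisely where the hypothesis ``$\tau\equiv 0$ or $\tau$ nowhere zero'' enters and excludes the mixed case flagged in Remark \ref{remarksolution}; and (iii) matching the notion of uniqueness in the corollary with that in Theorem \ref{ksurftheorem1}, via the sign ambiguity in the choice of $N_0$.
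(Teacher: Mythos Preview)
Your proposal is correct and follows exactly the same approach as the paper: apply Theorem \ref{ksurftheorem1} with $f_0=\gamma$ and $N_0$ the principal unit normal. The paper's own proof is a single sentence to this effect; you have simply filled in the verifications (non-characteristic via $\ip{f_0'}{N_0'}=-c^2\kappa\neq 0$, the parallel/non-parallel dichotomy via $\tau\equiv 0$ or $\tau\neq 0$, and the geodesic condition via $\gamma''\parallel n$) that the paper leaves implicit.
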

\begin{proof}
This follows by applying Theorem \ref{ksurftheorem1}, choosing $f_0$ to be $\gamma$ and $N_0$ to be
the principal unit normal to the curve.
\end{proof}
Note  that a geodesic curve $\gamma$ is a principal curve if and only if it is
a plane curve.
\begin{figure}[here] 
\begin{center}
\includegraphics[height=30mm]{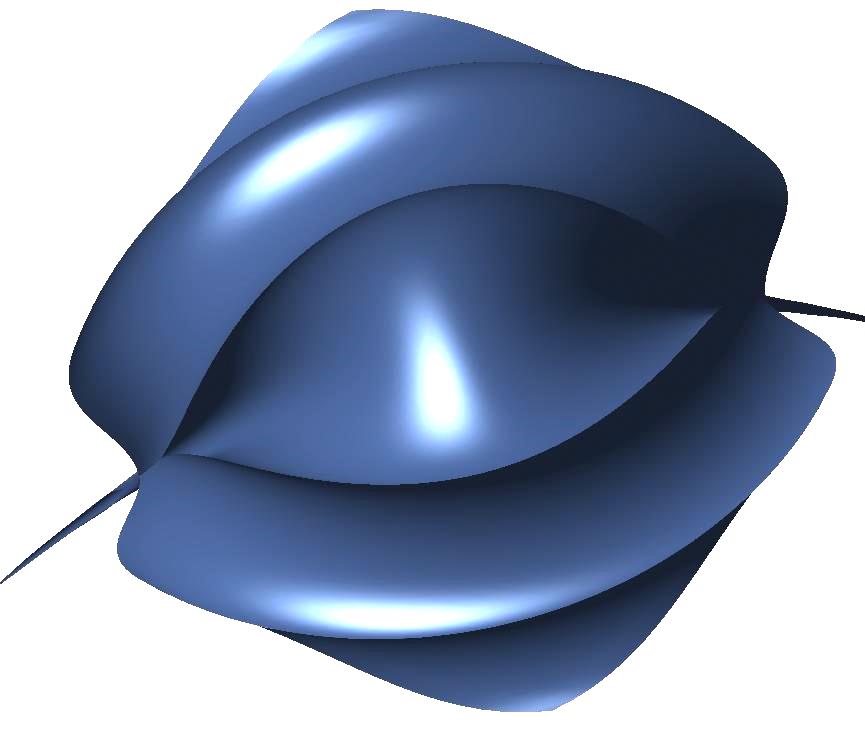} \hspace{0.4cm}
\includegraphics[height=30mm]{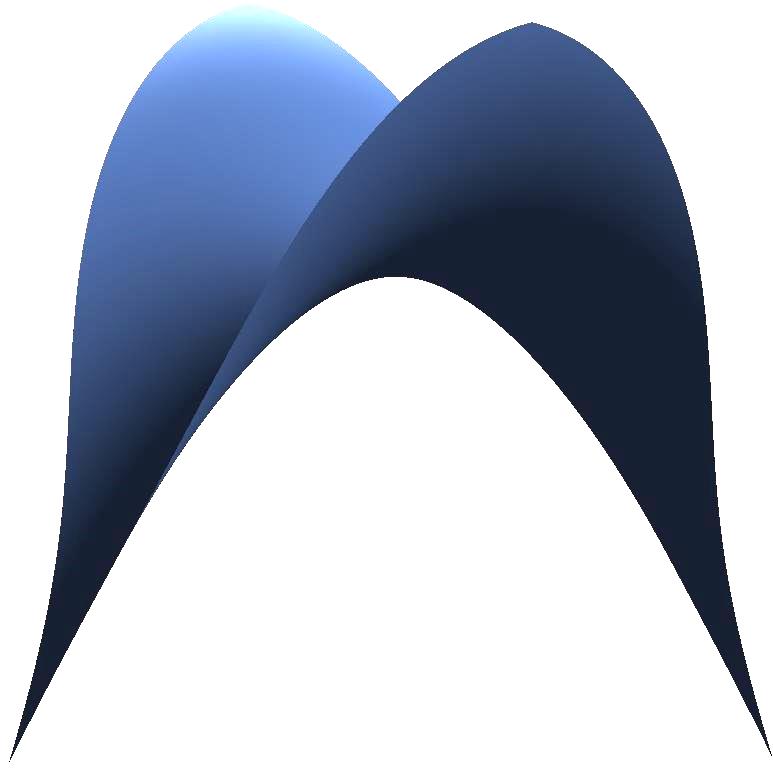} \hspace{0.4cm}
\includegraphics[height=30mm]{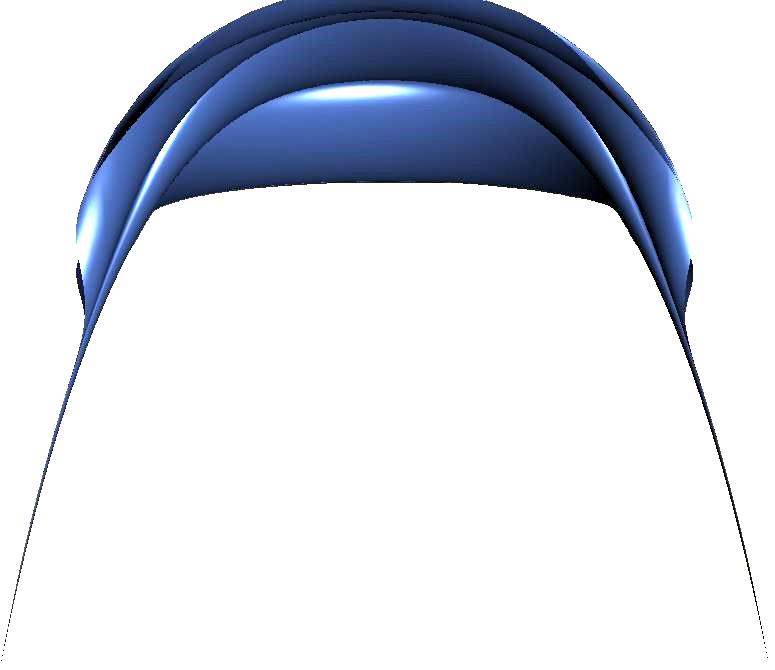}
\end{center}
\caption{Left, center: two plots of a part of the unique constant Gauss  curvature $-1$ surface which contains the parabola $y=x^2$ as a geodesic principal curve. 
Right the pseudospherical surface that contains the catenary $y=\cosh(x)$ as a geodesic principal curve. }  \label{parabolafigure}
\end{figure}
\begin{example}
We show how to construct the unique pseudospherical surface which contains the parabola $y=x^2$ as a geodesic. Take the curve $f_0(t)=(t,t^2,0)$ and $N_0(t)=\frac{1}{\sqrt{1+4t^2}}(2t,-1,0)$; since $N_0$ is the principal normal to $f_0$, $f_0$ will necessarily be a geodesic on the resulting pseudospherical surface. Note that $N_0^\prime(t)= (1+4t^2)^{-3/2}(2,4t,0)$ is parallel to $f_0'$, and $f_0$ will also be a principal curve on the surface. 
The solution will be well defined along the entire parabola
because $\left<f_0^\prime(t),N_0^\prime(t)\right>=(2+8t^2)(1+4t^2)^{-3/2}>0$ 
for all $t$. 
From the  formulae in Theorem \ref{ksurftheorem1} we have 
\beqas
|f_x| = |f_y|= \frac{\sqrt{(1+4t^2)^3+4}}{2(1+4t^2)}, \quad \theta_v =0\\ 
e^{i\theta} = \frac{2}{\sqrt{4+(1+4t^2)^3}}+ i \sqrt{\frac{(1+4t^2)^3}{4+(1+4t^2)^3}}.
\eeqas
The surface, together with the surface similarly generated by the catenary
$f_0(t)=(t,\cosh(t),0)$, can be seen in Figure \ref{parabolafigure}.

\begin{figure}[here] 
\begin{center}
\includegraphics[height=50mm]{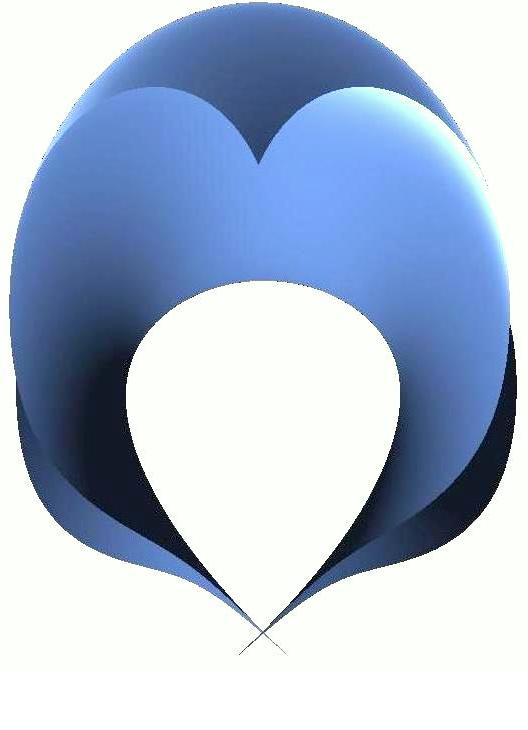} \hspace{1.5cm}
\includegraphics[height=50mm]{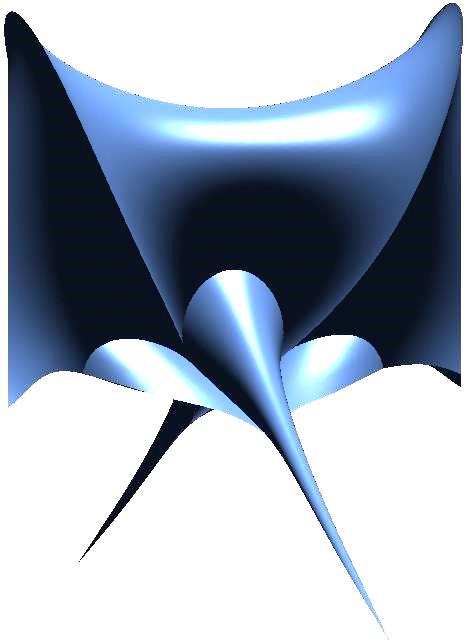}  \\
\end{center}
\caption{Two views of the unique pseudospherical surface that contains the cubic $y^2=x^2(x+1)$ as a geodesic principal curve. The cusp lines converge at infinity. }  \label{cubicfigure}
\end{figure}

\begin{figure}[here] 
\begin{center}
\includegraphics[height=50mm]{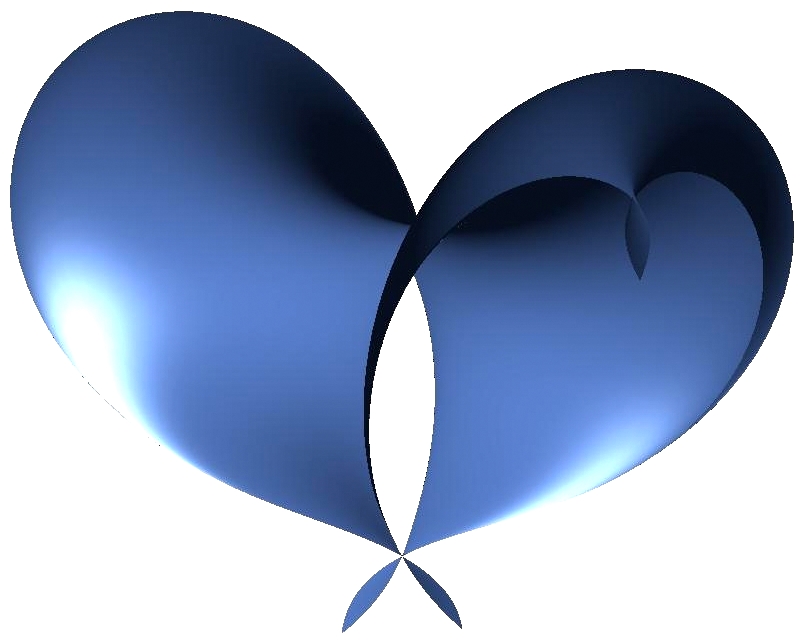}
\end{center}
\caption{The surface generated
by Bernoulli's Lemniscate $(x^2+y^2)^2 = x^2-y^2$. The cusp lines meet at the center of the figure eight, where the curvature of the plane curve is zero (compare Figure \ref{cubicfigure}).}  \label{lemfigure}
\end{figure}

In the same way, taking the parameterizations
\beqas
f_0(t)=(t^2-1, t(t^2-1),0), \\
f_0(t)= (\cos(t)/(1+\sin(t)^2), \sin(2t)/(2(1+\sin(t)^2)), 0),\\
f_0(t)=(\sin(t), 2\cos(t),0),
\eeqas
 of the cubic, Bernoulli's lemniscate, and an ellipse, 
 we obtain the  surfaces shown in Figures \ref{cubicfigure},
 \ref{lemfigure} and \ref{ellipsefigure}.

\end{example}

\pagebreak

\providecommand{\bysame}{\leavevmode\hbox to3em{\hrulefill}\thinspace}
\providecommand{\MR}{\relax\ifhmode\unskip\space\fi MR }
\providecommand{\MRhref}[2]{%
  \href{http://www.ams.org/mathscinet-getitem?mr=#1}{#2}
}
\providecommand{\href}[2]{#2}


\begin{thebibliography}{10}


\bibitem{akns1}
M~J Ablowitz, D~J Kaup, A~C Newell, and H~Segur, \emph{Method for solving the
  sine-{G}ordon equation}, Phys. Rev. Lett. \textbf{30} (1973), 1262--1264.
  
\bibitem{galvezetal2007}
J~Aledo, R~Chaves, and J~G\'alvez, \emph{The {C}auchy problem for improper
  affine spheres and the {H}essian one equation}, Trans. Amer. Math. Soc.
  \textbf{359} (2007), 4183--4208.

\bibitem{algami2009}
J~A Aledo, J~A G\'alvez, and P~Mira, \emph{A {d}'Alembert formula for flat
  surfaces in the 3-sphere}, J. Geom. Anal. \textbf{19} (2009), 211--232.

\bibitem{aliaschavesmira2003}
L~J Al\'ias, R~M~B Chaves, and P~Mira, \emph{Bj\"orling problem for maximal
  surfaces in {L}orentz-{M}inkowski space}, Math. Proc. Cambridge Philos. Soc.
  \textbf{134} (2003), 289--316.

\bibitem{bobenko1994}
A~I Bobenko, \emph{Surfaces in terms of 2 by 2 matrices. {O}ld and new
  integrable cases}, Harmonic maps and integrable systems, Aspects Math., no.
  E23, Vieweg, 1994, pp.~83--127.

\bibitem{sbjorling}
D~Brander, \emph{Singularities of spacelike constant mean curvature surfaces in
  {L}orentz-{M}inkowski space}, arxiv:0912.5081.

\bibitem{jgp}
\bysame, \emph{Loop group decompositions in almost split real forms and
  applications to soliton theory and geometry}, J. Geom. Phys. \textbf{58}
  (2008), 1792--1800.

\bibitem{branderdorf}
D~Brander and J~Dorfmeister, \emph{Generalized {DPW} method and an application
  to isometric immersions of space forms}, Math. Z. \textbf{262}, 143--172.

\bibitem{bjorling}
D~Brander and J~F Dorfmeister, \emph{The {B}j\"orling problem for non-minimal
  constant mean curvature surfaces}, Comm. Anal. Geom. \textbf{18} (2010),
  171--194.

\bibitem{dhkw}
U~Dierkes, S~Hildebrandt, A~K\"uster, and O~Wohlrab, \emph{Minimal surfaces.
  {I}. {B}oundary value problems}, Grundlehren der {M}athematischen
  {W}issenschaften, vol. 295, Springer-Verlag, 1992.

\bibitem{dit2000}
J~Dorfmeister, J~Inoguchi, and M~Toda, \emph{Weierstrass-type representation of
  timelike surfaces with constant mean curvature}, Contemp. Math. \textbf{308},
  77--99.

\bibitem{dorfwu2008}
J~F Dorfmeister and H~Wu, \emph{Construction of constant mean curvature n-noids
  from holomorphic potentials}, Math. Z. \textbf{258} (2008), 773--803.

\bibitem{galvezmira2004}
J~G\'alvez and P~Mira, \emph{Dense solutions to the {C}auchy problem for
  minimal surfaces}, Bull. Braz. Math. Soc. (N.S.) \textbf{35} (2004),
  387--394.

\bibitem{gm2005}
\bysame, \emph{The {C}auchy problem for the {L}iouville equation and {B}ryant
  surfaces}, Adv. Math. \textbf{195} (2005), 456--490.

\bibitem{gm2005-2}
\bysame, \emph{Embedded isolated singularities of flat surfaces in hyperbolic
  3-space}, Calc. Var. Partial Differential Equations \textbf{24} (2005),
  239--260.

\bibitem{krs}
M~Kilian, W~Rossman, and N~Schmitt, \emph{Delaunay ends of constant mean
  curvature surfaces}, Compos. Math. \textbf{144} (2008), 186--220.

\bibitem{kimyang2007}
Y~W Kim and S~D Yang, \emph{Prescribing singularities of maximal surfaces via a
  singular {B}j\"orling representation formula}, J. Geom. Phys. \textbf{57}
  (2007), 2167--2177.

\bibitem{krichever}
I~M Krichever, \emph{An analogue of the d'{A}lembert formula for the equations
  of a principal chiral field and the sine-{G}ordon equation}, Dokl. Akad. Nauk
  SSSR \textbf{253} (1980), no.~2, 288--292.

\bibitem{lopez2000}
R~L\'opez, \emph{Timelike surfaces with constant mean curvature in {L}orentz
  three-space}, Tohoku Math. J. \textbf{52} (2000), 515--532.

\bibitem{melkosterling}
M~Melko and I~Sterling, \emph{Application of soliton theory to the construction
  of pseudospherical surfaces in {${\bf R}^3$}}, Ann. Global Anal. Geom.
  \textbf{11} (1993), 65--107.

\bibitem{mira2006}
P~Mira, \emph{Complete minimal {M}\"obius strips in {${\mathbb R}^n$} and the
  {B}j\"orling problem}, J. Geom. Phys. \textbf{56} (2006), 1506--1515.

\bibitem{PreS}
A~Pressley and G~Segal, \emph{Loop groups}, Oxford Mathematical Monographs,
  Clarendon Press, Oxford, 1986.

\bibitem{sym1985}
A~Sym, \emph{Soliton surfaces and their applications}, Geometric aspects of the
  {E}instein equations and integrable systems, Lecture Notes in Physics, vol.
  239, Springer, 1985, pp.~154--231.

\bibitem{terng1997}
C~L Terng, \emph{Solitons and differential geometry}, J. Differential Geometry
  \textbf{45} (1997), 407--445.

\bibitem{todaagag}
M~Toda, \emph{Initial value problems of the sine-{G}ordon equation and geometric
  solutions}, Ann. Global Anal. Geom. \textbf{27} (2005), 257--271.

\end{thebibliography}
\end{document}